\documentclass[11pt]{amsart}

\usepackage[T1]{fontenc}
\usepackage{lmodern}
\usepackage{microtype}
\usepackage{geometry}
\usepackage{amsmath,amssymb,amsthm,mathtools}
\usepackage{enumitem}
\usepackage[hidelinks]{hyperref}

\allowdisplaybreaks
\setlist{nosep}

\newtheorem{theorem}{Theorem}[section]
\newtheorem{proposition}[theorem]{Proposition}
\newtheorem{lemma}[theorem]{Lemma}
\newtheorem{corollary}[theorem]{Corollary}

\theoremstyle{definition}

\theoremstyle{remark}
\newtheorem{remark}[theorem]{Remark}

\newcommand{\Fq}{\mathbb F_q}

\newcommand{\C}{\mathbb C}
\newcommand{\Kl}{\operatorname{Kl}}
\newcommand{\Op}{\operatorname{Op}}

\newcommand{\charac}{\operatorname{char}}
\newcommand{\disc}{\operatorname{disc}}

\newcommand{\tr}{\operatorname{tr}}

\newcommand{\Geom}{\mathrm{geom}}

\newcommand{\1}{\mathbf 1}
\newcommand{\cK}{\mathcal K\ell_2}
\newcommand{\cF}{\mathcal F}
\newcommand{\cP}{\mathcal P}
\newcommand{\cL}{\mathcal L}

\newcommand{\olFq}{\overline{\mathbb F}_q}

\title[Spectral Incidence Bounds over Finite Fields]{Spectral Incidence Bounds for Bilinearly Coupled Polynomial Families over Finite Fields}
\author{Yao Zhi}
\date{}
\email{ericyao2026@gmail.com}

\begin{document}
\begin{abstract}
Let $\Fq$ be a finite field, let $F,G\in\Fq[T]$ be nonconstant, and let $J\in\Fq[T]$. We study the parameterized two-parameter family
\[
C_{a,b}=\{(x,t)\in\Fq^2:t=J(x)+aF(x)+bG(x)+\lambda ab\},
\qquad (a,b)\in\Fq^2,
\]
where $\lambda\in\Fq^\times$, with incidences counted with parameter multiplicity. If $d=\deg F$, $e=\deg G$, and $\charac(\Fq)>d+e$, we prove the uniform spectral estimate
\[
\left|I(\cP,\Omega)-\frac{|\cP||\Omega|}{q}\right|
\ll_{d,e}q^{5/8}\sqrt{|\cP||\Omega|}
\]
for arbitrary $\cP,\Omega\subseteq\Fq^2$. The proof combines two additive Fourier transforms with a fourth-moment estimate for Kloosterman matrices. If $1\le d,e\le2$ and $\charac(\Fq)>3$, midpoint--difference coordinates and finite Weyl quantization sharpen the error to the Vinh scale $O(q^{1/2}\sqrt{|\cP||\Omega|})$.

We further show that the latter phenomenon is intrinsic to the coefficient curve rather than to the degree of a chosen parametrization. There is a polynomial L\"uroth factorization
\[
F=f\circ H,\qquad G=g\circ H,\qquad \Fq(f,g)=\Fq(T),
\]
unique up to an affine change of $H$. If $\Gamma$ is the plane curve parametrized properly by $(f,g)$ and
\[
\kappa_H=\max_{s\in\Fq}|H^{-1}(s)|,
\]
then finite-fibre pullback multiplies the centered incidence norm by at most $\sqrt{\kappa_H}$. Consequently, if $\deg\Gamma\le2$ and $\charac(\Fq)>3$, then
\[
\left|I(\cP,\Omega)-\frac{|\cP||\Omega|}{q}\right|
\ll \sqrt{\kappa_H}\,q^{1/2}\sqrt{|\cP||\Omega|},
\]
even when $F$ and $G$ have arbitrarily large raw degree. The resulting expansion estimates for $J(x)+yF(x)+zG(x)+\lambda yz$ hold without lower bounds on the individual input sets. In the balanced regime, the $q^3$ missing-value scale for this family is already covered by a theorem of Arala--Chow; the incidence theorems here additionally yield expansion consequences in strongly unbalanced regimes.
\end{abstract}
\maketitle

\section{Introduction}

\subsection{Background and related work}

Finite-field incidence estimates provide a flexible interface between spectral graph theory, exponential sums, algebraic geometry, and additive combinatorics. The subject is closely connected with the finite-field sum--product phenomenon initiated by Bourgain, Katz, and Tao \cite{BourgainKatzTao2004}, and with Fourier-analytic incidence questions arising in finite-field distance problems, where Gauss and Kloosterman sums already play a central role \cite{IosevichRudnev2007}. A basic result of Vinh is the finite-field point--line mixing estimate
\[
\left|I(\cP,\cL)-\frac{|\cP||\cL|}{q}\right|
\le q^{1/2}\sqrt{|\cP||\cL|},
\]
valid for points and lines in $\Fq^2$ \cite{Vinh2011}. This spectral scale is a natural benchmark for finite-field incidence problems.

Several higher-degree analogues are now available. Arala and Chow proved, when $q>r$, that for graphs of univariate polynomials of degree at most $r$ one has a discrepancy of order $q^{r/2}\sqrt{|\cP||\cL|}$ \cite{AralaChow2025}. Tamo obtained the refined estimate
\[
\left|I(\cP,\cL)-\frac{|\cP||\cL|}{q}\right|
\le \sqrt{|\cP||\cL|\bigl(q+(r-1)|\cL|\bigr)}
\]
for point--polynomial incidences \cite{Tamo2025}. Kong and Tamo developed a higher-dimensional point--variety framework \cite{KongTamo2025}. More recently, Shangguan, Yang, and Zhang introduced a Cayley-color-graph approach and proved, among other results, a $q^{(D-1)/2}$ discrepancy for polynomial families contained in suitable $D$-dimensional separating subspaces \cite{ShangguanYangZhang2026}. They also note that the $q^{r/2}$ point--polynomial spectral estimate appears independently, in implicit form, in work of Mattheus, Mubayi, Nie, and Verstra\"ete on off-diagonal hypergraph Ramsey numbers \cite{MattheusMubayiNieVerstraete2025}. Arala and Chow also established a $q^{5/6}$ error for a different structured family defined by symmetric non-diagonal polynomials \cite{AralaChow2026}.

The estimates above are not linearly ordered. For instance, Tamo's bound can be stronger for small polynomial families, whereas for a family of size comparable with $q^2$ its error coefficient is of order $q$. Our first theorem gives $q^{5/8}$ for the rigid two-parameter family considered here. Thus the gain should be understood as a structured-family improvement, particularly in the large-family regime, rather than as a uniform improvement of the general point--polynomial theory. Likewise, when the three-dimensional ambient polynomial space is covered by the separating-subspace theory of \cite{ShangguanYangZhang2026}, the corresponding ambient spectral scale is $q$, while the nonlinear coefficient relation $c=\lambda ab$ exploited below yields a smaller exponent.

A second theme of the paper is that the degree of a chosen polynomial parametrization is not always the correct geometric complexity. The polynomial form of L\"uroth's theorem implies that the field $\Fq(F,G)\subseteq\Fq(T)$ is generated by a polynomial $H$, and hence $F$ and $G$ factor through a primitive polynomial parametrization of their coefficient curve; see Gutierrez, Rubio, and Sevilla \cite{GutierrezRubioSevilla2001}, and compare the classical field-decomposition viewpoint of Fried and MacRae \cite{FriedMacRae1969}. We show that the incidence operator is stable under this finite-fibre pullback. In particular, the Vinh-scale result for quadratic coefficient functions extends to arbitrary raw degrees whenever the geometric coefficient curve has degree at most two. This structural extension is independent of the Fourier and Weil-representation arguments used to prove the primitive quadratic estimate.

Polynomial expansion over finite fields is closely connected with incidence geometry. Fourier-analytic and spectral approaches appear in the work of Hart, Li, and Shen \cite{HartLiShen2013} and Vinh \cite{Vinh2013}, among others. Tao developed a structural theory of expanding polynomials over finite fields of large characteristic \cite{Tao2015}; see also the survey of de Zeeuw for the broader Elekes--R\'onyai perspective \cite{deZeeuw2018}. Pham, Vinh, and de Zeeuw obtained expansion results for three-variable polynomials, including a classification in the quadratic case \cite{PhamVinhDeZeeuw2019}. Arala and Chow subsequently proved almost-strong expansion for generic higher-degree polynomials and, for ternary polynomials of the form $au^d+\Phi(v,w)u+\Psi(v,w)$ with $\Phi,\Psi$ algebraically independent, a $q^3/(|U||V||W|)$ missing-value bound once each input set has size at least a constant multiple of $q^{2/3}$ \cite[Theorem~1.7]{AralaChow2025}. Their later work gives a $q^{11/3}/(|X||Y||Z|)$ bound for another structured higher-degree family \cite{AralaChow2026}. The expansion polynomial considered below can be rewritten in the former Arala--Chow shape after permuting the variables; accordingly, our expansion statements are presented as applications of the incidence theorems, with emphasis on the absence of individual lower-size hypotheses. The discrete oscillatory-operator viewpoint used in the low-degree part of this paper is also related in spirit to Dong's finite-field H\"ormander theorem \cite{Dong2019}.

\subsection{The family and the main results}

Fix nonconstant polynomials $F,G\in\Fq[T]$, a polynomial $J\in\Fq[T]$, and $\lambda\in\Fq^\times$. For $(a,b)\in\Fq^2$ define
\begin{equation}
C_{a,b}=\{(x,t)\in\Fq^2:t=J(x)+aF(x)+bG(x)+\lambda ab\}.
\label{eq:curve-family}
\end{equation}
For $\cP\subseteq\Fq^2$ and a parameter set $\Omega\subseteq\Fq^2$, define the parameterized incidence number
\[
I(\cP,\Omega)
=
\#\{((x,t),(a,b))\in\cP\times\Omega:(x,t)\in C_{a,b}\}.
\]
The adjective \emph{parameterized} is important: if distinct parameters define the same geometric curve, they are counted with their multiplicity.

Our first result is uniform in the degree of the common term $J$.

\begin{theorem}[Fixed-degree incidence bound]\label{thm:general-incidence}
Let $F,G\in\Fq[T]$ be nonconstant of degrees $d$ and $e$, respectively, and suppose that $p=\charac(\Fq)>d+e$. Let $J\in\Fq[T]$ and $\lambda\in\Fq^\times$. Then, for every $\cP\subseteq\Fq^2$ and every $\Omega\subseteq\Fq^2$,
\[
\left|I(\cP,\Omega)-\frac{|\cP||\Omega|}{q}\right|
\ll_{d,e}q^{5/8}\sqrt{|\cP||\Omega|}.
\]
\end{theorem}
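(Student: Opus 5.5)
The plan is to bound the operator norm of the centered incidence operator by taking a Fourier transform in the $t$-variable and then applying a $TT^*$ argument in the remaining variables. Let $A$ act on functions $h$ on the parameter plane $\Fq^2$ by
\[
Ah(x,t)=\sum_{(a,b)\in\Fq^2}\1\bigl[t=J(x)+aF(x)+bG(x)+\lambda ab\bigr]\,h(a,b),
\]
so that $I(\cP,\Omega)=\langle A\1_\Omega,\1_\cP\rangle$. By Cauchy--Schwarz it suffices to show that $A$, restricted to the orthogonal complement of the term producing $|\cP||\Omega|/q$, has operator norm $\ll_{d,e}q^{5/8}$. I will in fact aim for the bound $\sqrt{\min(d,e)}\,q^{1/2}$; if the computation below survives scrutiny, it implies the stated estimate.

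\textbf{Step 1: Fourier expansion in $t$.} Fix a nontrivial additive character $\psi$ and write $\1[t=u]=q^{-1}\sum_{s\in\Fq}\psi(s(t-u))$. This gives
\[
Ah(x,t)=\frac1q\sum_{s}\psi\bigl(s(t-J(x))\bigr)\,K_sh(x),
\qquad
K_sh(x)=\sum_{a,b}\psi\bigl(-s(aF(x)+bG(x)+\lambda ab)\bigr)h(a,b).
\]
The $s=0$ term is $q^{-1}\sum_{a,b}h(a,b)$, which yields exactly the main term $|\cP||\Omega|/q$. For the remaining part $A'$, Parseval in $t$ gives
\[
\sum_t|A'h(x,t)|^2=\frac1q\sum_{s\ne0}|K_sh(x)|^2 .
\]
Summing over $x$, we get
\[
\|A'h\|^2=\frac1q\sum_{s\neq0}\|K_sh\|^2,
\]
where the map $h\mapsto (K_sh)_s$ must be controlled jointly. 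The cheap route is to bound each $K_s$ separately by $\|K_s\|\le M$, which gives $\|A'h\|^2\le q^{-1}(q-1)M^2\|h\|^2$. That is too lossy, so instead I will view $h\mapsto (K_sh(x))_{s\neq0,x}$ as a single operator $\mathcal K$.

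\textbf{Step 2: $TT^*$ for $\mathcal K$.} The kernel of $\mathcal K\mathcal K^*$ between $(s,x)$ and $(s',y)$ is
\[
\sum_{a,b}\psi\bigl(-a(sF(x)-s'F(y))-b(sG(x)-s'G(y))-\lambda(s-s')ab\bigr).
\]
\begin{itemize}
\item On the diagonal $s=s'$, the bilinear term cancels. The sum becomes $q^2\,\1[F(x)=F(y),\,G(x)=G(y)]$, which is supported on pairs in a common fibre of $x\mapsto(F(x),G(x))$, and these fibres have size at most $\min(d,e)$.
\item For $s\neq s'$, complete the square in $a$ (summing over $a$ forces $b$ to be linear in the data), or equivalently evaluate the Gauss-type sum. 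This gives a single character value of modulus $q$. The character itself is $\psi\bigl(-(sF(x)-s'F(y))(sG(x)-s'G(y))/(\lambda(s-s'))\bigr)$.
\end{itemize}

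\textbf{Step 3: bounding $\mathcal K\mathcal K^*$, the main obstacle.} The diagonal contribution has Schur norm $\le q^2\min(d,e)$. The off-diagonal block is a $(q-1)q\times(q-1)q$ matrix with entries $q\,\psi(\Phi(s,x;s',y))$. Its trivial Schur bound is $q\cdot q^2$, which would only give error $q$. The real work is to show cancellation in this phase matrix. Summing first over $x$ and $y$ turns the phase into sums of Kloosterman type in $s,s'$ (after substituting $u=s-s'$). The fourth moment of the resulting Kloosterman matrix is then estimated via Weil's bound, which requires $\charac(\Fq)>d+e$ so that the phase polynomials in $x$ are non-degenerate.

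I expect this fourth-moment step to be the main obstacle. Bounding $\tr\bigl((\mathcal K\mathcal K^*)^2\bigr)$ and taking fourth roots is where the exponent $5/8$ should come from: a saving of $q^{1/2}$ in an eighth-power count corresponds to $q^{1/8}$ in the norm. I would try first to reduce the fourth moment to counting solutions of the system
\[
sF(x)-s'F(y)=\dots,\qquad sG(x)-s'G(y)=\dots,
\]
and to bound its character sums by square-root cancellation.
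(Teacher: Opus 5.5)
Your Steps 1 and 2 are correct and match the paper's reduction. The $t$-Fourier transform removes the main term with $\|A'\|=q^{-1/2}\|\mathcal K\|$. The Gram matrix $\mathcal K\mathcal K^*$ splits into same-frequency blocks of norm at most $q^2\min(d,e)$ plus cross-frequency entries of modulus $q$; your sign in the phase is off, which is harmless.

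\textbf{First gap: the block structure is missing.} Step 3 is the entire content of the theorem, and the mechanism you sketch does not work. The missing structural idea is as follows.
\begin{itemize}
\item By the identity $(sF(x)-s'F(y))(sG(x)-s'G(y))=(s'-s)\bigl(s'F(y)G(y)-sF(x)G(x)\bigr)+ss'H_x(y)$, where $H_x(y)=(F(x)-F(y))(G(x)-G(y))$, conjugating by the diagonal unitary with entries $\psi(sF(x)G(x)/\lambda)$ reduces the cross phase to $\psi\bigl(ss'H_x(y)/(\lambda(s-s'))\bigr)$.
\item You must then \emph{invert the frequency}, $\alpha=s^{-1}$. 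Since $ss'/(s-s')=1/(\alpha'-\alpha)$, the kernel depends on the frequencies only through $\alpha'-\alpha$.
\item Extend $\alpha$ from $\Fq^\times$ to $\Fq$; the original matrix is a principal submatrix, so its norm can only increase. Fourier transforming in $\alpha$ then block-diagonalizes the cross part into the $q\times q$ matrices $M_\tau(x,y)=\Kl(1,\tau H_x(y)/\lambda)$, $\tau\ne0$, plus a block $M_0$ of norm $O_{d,e}(q)$.
\end{itemize}
Your substitution $u=s-s'$ gives no convolution structure: the phase becomes $(s^2/u-s)H_x(y)/\lambda$. Summing over $x,y$ does not produce Kloosterman sums either; they live in the inverted-frequency variable, with $x,y$ as the matrix indices. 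Nor can moments of the unreduced matrix help. Its $\asymp q^4$ cross-frequency entries all have modulus $q$, so $\tr((\mathcal K\mathcal K^*)^2)\gg q^6$, and its fourth root returns only the trivial bound $\|A'\|\ll q$. The moment method has to be applied to each block separately, using $\|\bigoplus_\tau M_\tau\|=\max_\tau\|M_\tau\|$.

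\textbf{Second gap: Weil's bound is not enough.} The per-block fourth moment is $\tr(M_\tau^4)=\sum_{x,z}\bigl|\sum_y\Kl(1,cH_x(y))\Kl(1,cH_z(y))\bigr|^2$ with $c=\tau/\lambda$. The pointwise bound $|\Kl(1,\cdot)|\le2q^{1/2}$ only gives $\|M_\tau\|\le2q^{3/2}$, hence $q^{3/4}$ in the theorem. Opening both Kloosterman sums and applying Weil in $y$ gives $O_{d,e}(q^{5/2})$ per entry, worse than trivial.

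What is needed is square-root cancellation in all three variables: $\sum_y\Kl(1,cR_1(y))\Kl(1,cR_2(y))\ll_n q^{3/2}$ whenever $R_1,R_2$ have distinct geometric zero supports. This is a Deligne-level statement resting on the monodromy of the Kloosterman sheaf.
\begin{itemize}
\item Its geometric monodromy is $\mathrm{SL}_2$, so the pullbacks $(cR_i)^*\cK$ stay geometrically irreducible.
\item Its local monodromy at $0$ is unipotent, so a zero of $R_1$ not shared by $R_2$ is a ramification point of only one pullback. Hence the tensor product has no geometrically trivial constituent. This, together with separability and tameness at $\infty$, is where $p>d+e$ is used.
\item If $S_x=S_z$ (the zero supports of $H_x,H_z$), then $H_x(z)=0$, so there are only $O((d+e)q)$ exceptional pairs.
\end{itemize}
Together these give $\tr(M_\tau^4)\ll q^2\cdot q^3+q\cdot q^4$, so $\|M_\tau\|\ll q^{5/4}$, $\|\mathcal K\mathcal K^*\|\ll q^2+q\cdot q^{5/4}$, and $\|A'\|\ll q^{5/8}$.

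Finally, drop the target $\sqrt{\min(d,e)}\,q^{1/2}$. It would require $\|M_\tau\|\ll q$ for every $\tau$, which nothing in your argument provides; the paper obtains such a bound only for $\deg F,\deg G\le2$, via Weyl quantization. It also contradicts your own later expectation of the exponent $5/8$.
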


For quadratic coefficient functions the spectral estimate improves to the point--line scale.

\begin{theorem}[Quadratic-range incidence bound]\label{thm:low-degree-incidence}
Let $1\leq\deg F,\deg G\leq2$ and suppose that $\charac(\Fq)>3$. Under the remaining hypotheses of Theorem~\ref{thm:general-incidence},
\[
\left|I(\cP,\Omega)-\frac{|\cP||\Omega|}{q}\right|
\ll q^{1/2}\sqrt{|\cP||\Omega|},
\]
with an absolute implied constant.
\end{theorem}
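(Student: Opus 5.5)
The plan is to reduce the centered incidence bound to an operator-norm estimate for a symmetric kernel on parameter space, and then to diagonalize that kernel explicitly using the quadratic structure. Write $\psi$ for a nontrivial additive character of $\Fq$ and
\[
\Phi(x;a,b)=J(x)+aF(x)+bG(x)+\lambda ab .
\]
By duality it suffices to bound the centered incidence operator $T_0=T-\tfrac1q\1\1^{*}$, where
\[
(Tf)(x,t)=\sum_{a,b}\1[t=\Phi(x;a,b)]\,f(a,b),
\]
and to show $\|T_0\|_{\ell^2\to\ell^2}\ll q^{1/2}$. Expanding $\1[t=\Phi]=q^{-1}\sum_h\psi(h(t-\Phi))$, the term $h=0$ is exactly the main term. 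Plancherel in $t$ then gives
\[
\|T_0f\|^2=\frac1q\sum_{h\neq0}\sum_x\Bigl|\sum_{a,b}\psi(-h\Phi(x;a,b))f(a,b)\Bigr|^2 ,
\]
so the goal becomes
\[
S(f):=\sum_{h\ne0}\sum_x\Bigl|\sum_{a,b}\psi(-h\Phi(x;a,b))f(a,b)\Bigr|^2\ll q^2\|f\|^2 .
\]

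Next, open the square and sum over $h$ and $x$ first. Put
\[
D(x)=(a-a')F(x)+(b-b')G(x)+\lambda(ab-a'b') .
\]
Then $\sum_{h\in\Fq}\sum_x\psi(hD(x))=q\,N$, where $N=N(a,b;a',b')$ is the number of roots of $D$ in $\Fq$. Hence $S(f)=q\langle Mf,f\rangle$ with kernel $M=N-1$; note that $J$ has disappeared. It remains to prove $\|M\|\ll q$, and I would split $M$ by the nature of $D$.

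\emph{Degenerate part.} This is where $D\equiv0$, so that $N=q$. If $1,F,G$ are linearly independent, this forces $(a,b)=(a',b')$, giving the diagonal $(q-1)\,\mathrm{Id}$, which has norm $q-1$. If $G=\alpha F+\beta$, the condition $D\equiv0$ is one linear equation and one bilinear equation in $(a',b')$ for fixed $(a,b)$. This leaves at most $O(1)$ solutions in each row and column, so Schur's test gives norm $O(q)$.

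\emph{Nondegenerate part.} If $D$ has degree $1$ then $N-1=0$, and if $D$ is a nonzero constant then $N-1=-1$. If $\deg D=2$, then since $p>3$ we have $N-1=\chi(\disc D)$, with $\chi$ the quadratic character. So, up to terms supported on the lower-dimensional set where the leading coefficient of $D$ vanishes (to be handled by Schur's test or by a rank-one correction), $M$ is the kernel
\[
K\bigl((a,b),(a',b')\bigr)=\chi\bigl(\Delta(a,b,a',b')\bigr).
\]
Here $\Delta$ is an explicit polynomial of degree at most $2$ in the differences $a-a'$, $b-b'$, with coefficients involving the bilinear term $ab-a'b'$. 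The case $\deg F=\deg G=1$ is trivial in this scheme, because then $N\le1$ off the degenerate set.

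The main obstacle is bounding $\|K\|\ll q$. The trivial bound is $q^2$, so a genuine square-root gain is needed. I would pass to midpoint--difference coordinates $m=\tfrac12\bigl((a,b)+(a',b')\bigr)$ and $\delta=(a,b)-(a',b')$; the bilinear term becomes $ab-a'b'=m_1\delta_2+m_2\delta_1$, so $\Delta$ is a quadratic polynomial in $\delta$ whose coefficients are affine in $m$. Then I would use the Gauss-sum identity
\[
\chi(u)=\frac{1}{g(\chi)}\sum_{s\ne0}\chi(s)\psi(su)
\]
to linearize $\chi$, turning $K$ into a superposition over $s$ of kernels $\psi\bigl(sQ(m,\delta)\bigr)$ with $Q$ of degree at most $1$ in $m$. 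Each such kernel is the Weyl quantization of a quadratic-phase symbol, hence a metaplectic-type operator. Taking the partial Fourier transform in $m$ should make it essentially unitary up to a factor $q$, and the factor $|g(\chi)|^{-1}=q^{-1/2}$ should compensate for summing over $s$. What I expect to need to check carefully is that the $s$-sum does not lose more than $q^{1/2}$, that is, that the operators for different $s$ are nearly orthogonal. I also expect to need the lower-dimensional loci where the symbol degenerates to be handled separately by Schur's test.
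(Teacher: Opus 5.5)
\textbf{There is a genuine gap at the key step.}

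\textbf{What is correct.} Your reduction is sound. The matrix $qM$ is exactly $\mathsf S^*\mathsf S$, the Gram matrix on the parameter side; the paper uses the dual Gram matrix $\mathsf S\mathsf S^*$, which has the same nonzero spectrum. The identity $N-1=\chi(\disc D)$ for $\deg D=2$ is correct. The diagonal $D\equiv0$, the locus where the leading coefficient of $D$ vanishes, and the case $\deg F=\deg G=1$ are all harmless by Schur's test, as you say.

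\textbf{What is missing.} The whole theorem is the estimate $\|\chi(\Delta)\|\ll q$ for a $q^2\times q^2$ matrix with entries $\pm1$, and you do not prove it. The mechanism you sketch would not deliver it, for two reasons.

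First, $\Delta(m,\delta)$ is affine in $m$ but quadratic in $\delta$, so it is \emph{cubic} in $(a,b,a',b')$. The term $-4\lambda(F_2\delta_1+G_2\delta_2)(m_1\delta_2+m_2\delta_1)$, with $F_2,G_2$ the $T^2$-coefficients of $F,G$, has degree three. Hence $\psi(s\Delta)$ is not a metaplectic kernel. For generic $m$ its Weyl symbol $\sum_\delta\psi(s\Delta(m,\delta)-\eta\cdot\delta)$ is a two-dimensional Gauss sum with phase $-\eta^{T}A(m)^{-1}\eta/(4s)$, where $A(m)$ is the matrix of $\delta\mapsto\Delta(m,\delta)$. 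This phase is rational in $m$ and degenerates on $\det A(m)=0$. The operators $K_s$ are not multiples of unitaries either: already for $F=T^2$, $G=T$ one checks that $(K_sK_s^*)((a,b),(\tilde a,\tilde b))$ has modulus $q$ whenever $a\ne\tilde a$.

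Second, even if each $K_s$ were $q$ times a unitary, near-orthogonality in the Cotlar--Stein sense could not produce the missing factor $q^{1/2}$. One would have $\|K_s^*K_{s'}\|=q^2$ for all $s,s'$, and Cotlar--Stein returns only the trivial bound $q^{-1/2}\cdot q\cdot q=q^{3/2}$. The gain has to come from cancellation of the weights $\chi(s)$ against a \emph{commuting} family, and nothing in your variables supplies one.

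\textbf{What the paper does instead.} That commuting structure is exactly what the paper extracts on the other side.
\begin{itemize}
\item Summing over $(a,b)$ first is exact: $\sum_{a,b}\psi(Cab+Aa+Bb)=q\psi(-AB/C)$ for $C\ne0$.
\item After a diagonal unitary conjugation, the cross-frequency kernel depends on $\xi,\eta$ only through $\xi^{-1}-\eta^{-1}$. A second Fourier transform in that variable splits the problem into $q$ blocks of size $q\times q$, the nonzero ones being the Kloosterman matrices $\Kl(1,cH_x(z))$.
\item For quadratic $F,G$, the \emph{one-dimensional} midpoint--difference factorization $H_x(z)=u^2Q(m)$, with $Q=F'G'$, makes the Weyl symbol of each block equal to $q\chi(-1)\chi(4cQ(m)-\xi^2)$, up to rank-one corrections. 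This lives on a two-dimensional phase space.
\item When $Q$ is a nondegenerate quadratic, the Gauss-sum expansion produces $U_t=\Op^W(\psi(t(Am^2-\xi^2)))$. Up to unimodular scalars these are Weil-representation images of elements of a single maximal torus $T_A$, so they are simultaneously diagonalizable.
\item On each character space the $t$-sum collapses to the scalar sum $\sum_t\chi(t)\gamma_A(t)\nu(g_t)\psi(Ct)$, which is $O(q^{1/2})$ by Deligne.
\item Linear $Q$ needs a separate cubic-sum argument, and the proportional case $G=\kappa F+\delta$ is reduced to Vinh's theorem.
\end{itemize}

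To complete your route you would have to recover this hidden translation symmetry on the parameter side, or find some other commuting structure for the family $\psi(s\Delta)$. As written, the key step is missing.
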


Theorem~\ref{thm:low-degree-incidence} has an intrinsic extension. Let
\[
\Gamma_{F,G}
=
\overline{\{(F(x),G(x)):x\in\olFq\}}
\subseteq\mathbb A^2_{\olFq}
\]
be the geometric coefficient curve. Proposition~\ref{prop:primitive-factorization} below gives a factorization
\[
F=f\circ H,
\qquad
G=g\circ H,
\qquad
\Fq(f,g)=\Fq(T),
\]
where $H\in\Fq[T]$ is unique up to an affine change of variable. Define
\[
\kappa(F,G)
=
\max_{s\in\Fq}|\{x\in\Fq:H(x)=s\}|.
\]
This quantity is well defined because affine changes of $H$ only permute its fibres over $\Fq$.

\begin{theorem}[Intrinsic quadratic coefficient-curve bound]\label{thm:intrinsic-quadratic}
Suppose that $p>3$ and
\[
\deg\Gamma_{F,G}\le2.
\]
Then, for arbitrary $J\in\Fq[T]$, $\lambda\in\Fq^\times$, $\cP\subseteq\Fq^2$, and $\Omega\subseteq\Fq^2$,
\[
\left|I(\cP,\Omega)-\frac{|\cP||\Omega|}{q}\right|
\ll
\sqrt{\kappa(F,G)}\,q^{1/2}\sqrt{|\cP||\Omega|},
\]
with an absolute implied constant.
\end{theorem}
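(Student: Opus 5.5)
The plan is to reduce Theorem~\ref{thm:intrinsic-quadratic} to Theorem~\ref{thm:low-degree-incidence}, applied to the primitive pair $(f,g)$, by means of a pullback estimate for the centred incidence operator.

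\emph{Step 1: identify the curve.} Take the factorization $F=f\circ H$, $G=g\circ H$ from Proposition~\ref{prop:primitive-factorization}. Since $\Fq(f,g)=\Fq(T)$, the map $s\mapsto(f(s),g(s))$ is a proper (birational) parametrization of $\Gamma_{F,G}$. Hence
\[
\deg\Gamma_{F,G}=\max(\deg f,\deg g).
\]
For a proper polynomial parametrization, the degree of the curve equals the maximal degree of the coordinates. The hypothesis $\deg\Gamma_{F,G}\le2$ then gives $\deg f,\deg g\le2$. Both are nonconstant, because $F,G$ are nonconstant.

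\emph{Step 2: the $s$-variable operator.} Write $\tilde{\cP}$-type data in the variable $s$. Define the operator
\[
(Au)(x,t)=\sum_{(a,b)}u(a,b)\,\1[t=J(x)+aF(x)+bG(x)+\lambda ab].
\]
Then $I(\cP,\Omega)=\langle \1_\cP,A\1_\Omega\rangle$, and we need $\|A-\frac1q\mathbf{J}\|_{\mathrm{op}}\ll\sqrt{\kappa}\,q^{1/2}$, where $\mathbf{J}$ is the all-ones matrix. The term $J(x)$ is not a function of $H(x)$, so we first handle it by a shear. For each fixed $x$, the map $t\mapsto t-J(x)$ is a bijection of $\Fq$. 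Therefore $A=S\circ A_0$, where $S$ is unitary, preserves constants, and commutes with the averaging projection, and $A_0$ is the operator with $J=0$. It suffices to treat $J=0$.

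Next, $A_0u(x,t)=(Bu)(H(x),t)$, where $B$ is the incidence operator for the family $t=af(s)+bg(s)+\lambda ab$. Theorem~\ref{thm:low-degree-incidence} applies to $B$ with $J=0$ and $\deg f,\deg g\le2$. Its proof is spectral, so I will use it in operator form: $\|B-\frac1q\mathbf{J}\|_{\mathrm{op}}\ll q^{1/2}$. If only the set form is available, a standard dyadic level-set argument recovers the operator norm up to a constant. For cleanliness, the constant-free form should come from the spectral proof itself.

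\emph{Step 3: pullback bound.} Let $\Pi:\ell^2(\Fq^2_{s,t})\to\ell^2(\Fq^2_{x,t})$ be defined by
\[
\Pi v(x,t)=v(H(x),t).
\]
Then $\|\Pi v\|^2=\sum_{s}|H^{-1}(s)|\sum_t|v(s,t)|^2\le\kappa\|v\|^2$, so $\|\Pi\|\le\sqrt\kappa$. Moreover $\Pi$ maps the mean-zero part correctly: $A_0-\frac1q\mathbf{J}=\Pi\,(B-\frac1q\mathbf{J})$. To check this, note $\Pi$ sends the constant $1$ to $1$, and the all-ones kernel pulls back to the all-ones kernel. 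Hence
\[
|I-|\cP||\Omega|/q|\le\|\1_\cP\|\,\sqrt\kappa\,\|B-\tfrac1q\mathbf{J}\|\,\|\1_\Omega\|,
\]
which gives the claim.

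\emph{Main obstacle.} The delicate point is Step 1: justifying that $\deg\Gamma$ equals $\max(\deg f,\deg g)$. This needs the properness $\Fq(f,g)=\Fq(T)$ over $\olFq$, not just over $\Fq$. I would argue that Lüroth generation is stable under base change, since the degree $[\Fq(T):\Fq(f,g)]$ is preserved under extension of scalars, and then count intersections with a generic line $\alpha X+\beta Y=\gamma$. Such a line has $\max(\deg f,\deg g)$ preimages in $s$, and these correspond bijectively to intersection points by birationality. A secondary check is that the operator-norm form of Theorem~\ref{thm:low-degree-incidence} is what its proof yields.
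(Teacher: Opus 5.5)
Your proposal is correct and follows the paper's argument: the paper also takes the factorization of Proposition~\ref{prop:primitive-factorization} (which already proves $\deg\Gamma_{F,G}=\max\{\deg f,\deg g\}$), uses the operator-norm bound $\|\mathsf B_{f,g,0}\|\ll q^{1/2}$ that the proof of Theorem~\ref{thm:low-degree-incidence} actually yields, and writes $\mathsf B_{F,G,J}=\mathsf R\,\mathsf B_{f,g,0}$. Here $\mathsf R$ is the matrix of $(x,t)\mapsto(H(x),t-J(x))$, which is exactly your composite $S\Pi$ and has norm $\sqrt{\kappa(F,G)}$. The step you flag as the main obstacle is immediate, since $T\in\Fq(f,g)$ already gives $T\in\olFq(f,g)$.
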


Thus, up to the explicit finite-fibre loss $\sqrt{\kappa(F,G)}$, the Vinh-scale exponent is controlled by the intrinsic coefficient curve rather than by the raw degrees of $F$ and $G$. For example, for every nonconstant $H\in\Fq[T]$ the pair
\[
F=H,
\qquad
G=H^2
\]
has coefficient curve $V=U^2$, and Theorem~\ref{thm:intrinsic-quadratic} gives a $q^{1/2}$ discrepancy up to the factor $\sqrt{\kappa(F,G)}\le\sqrt{\deg H}$. If $H$ is a permutation polynomial, then $\kappa(F,G)=1$ and the centered incidence operator has exactly the same singular values as the primitive quadratic model; see Remark~\ref{rem:permutation-cover}.

The same transfer principle also strengthens the fixed-degree theorem when $F$ and $G$ are high-degree parametrizations of a low-degree coefficient curve.

\begin{corollary}[Intrinsic fixed-degree incidence bound]\label{cor:intrinsic-general-incidence}
Let $D=\deg\Gamma_{F,G}$ and suppose that $p>2D$. Then
\[
\left|I(\cP,\Omega)-\frac{|\cP||\Omega|}{q}\right|
\ll_D
\sqrt{\kappa(F,G)}\,q^{5/8}\sqrt{|\cP||\Omega|}
\]
for arbitrary $J$, $\lambda$, $\cP$, and $\Omega$ as above.
\end{corollary}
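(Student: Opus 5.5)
The plan is to combine the primitive factorization of Proposition~\ref{prop:primitive-factorization} with a finite-fibre transfer principle, and then apply Theorem~\ref{thm:general-incidence} to the primitive pair $(f,g)$. Write $F=f\circ H$, $G=g\circ H$ with $\Fq(f,g)=\Fq(T)$. Since $(f,g)$ is a proper parametrization of $\Gamma=\Gamma_{F,G}$, the degree of $\Gamma$ equals $\max(\deg f,\deg g)$: a generic line $\alpha U+\beta V=\gamma$ meets $\Gamma$ in as many points as the polynomial $\alpha f+\beta g-\gamma$ has roots, and properness gives injectivity on a dense set. Hence $\deg f,\deg g\le D$, so $\deg f+\deg g\le 2D<p$. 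Thus Theorem~\ref{thm:general-incidence} applies to the family with coefficient functions $f,g$. We may take any common term there, and the constant depends only on $D$.

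\textbf{Reduction of the common term.} The common term $J$ need not factor through $H$. We handle this by writing the incidence operator fibrewise. Let $M$ be the $\Fq^2\times\Fq^2$ incidence matrix for $(F,G,J)$, so $I(\cP,\Omega)=\langle \1_\cP, M\1_\Omega\rangle$. Let $M_0=M-\frac1q\mathbf{J}$ be its centered version, where $\mathbf{J}$ denotes the all-ones matrix; since every curve $C_{a,b}$ has exactly $q$ points and every point lies on exactly $q$ curves, this centering is correct. The goal is to show $\|M_0\|_{op}\le\sqrt{\kappa}\,\|N_0\|_{op}$, where $N_0$ is the centered operator for $(f,g)$ with common term $0$. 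Then Theorem~\ref{thm:general-incidence}, applied in operator-norm form, gives $\|N_0\|\ll_D q^{5/8}$. This operator-norm form is obtained by taking $\cP,\Omega$ to be level sets, or directly from the spectral proof; equivalently it is the presumably later transfer lemma ("finite-fibre pullback multiplies the centered incidence norm by at most $\sqrt{\kappa_H}$") from the abstract.

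\textbf{The transfer step.} For $\phi$ on $\Omega$-space, $(M\phi)(x,t)=\sum_{a,b}\phi(a,b)\1[t-J(x)=af(H(x))+bg(H(x))+\lambda ab]$. Define the shear $S_J:(x,t)\mapsto(x,t-J(x))$, a permutation of $\Fq^2$ that preserves the constant functions. Then $M_0=S_J^{*}\Pi^{*}N_0$, where $\Pi^{*}$ is pullback along $\pi:(x,t)\mapsto(H(x),t)$, i.e.\ $(\Pi^{*}u)(x,t)=u(H(x),t)$. Pullback also preserves the constants, so centering commutes with it. Finally,
\[
\|\Pi^{*}u\|_2^2=\sum_{s}|H^{-1}(s)\cap\Fq|\sum_t|u(s,t)|^2\le\kappa\|u\|_2^2,
\]
so $\|\Pi^{*}\|\le\sqrt\kappa$, and $S_J^{*}$ is unitary. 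Hence $\|M_0\|\le\sqrt\kappa\,\|N_0\|$, and Cauchy--Schwarz with $\|\1_\cP\|_2\|\1_\Omega\|_2=\sqrt{|\cP||\Omega|}$ finishes the proof.

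\textbf{Main obstacle.} The main obstacle is bookkeeping rather than analysis. Theorem~\ref{thm:general-incidence} is stated for indicator sets, whereas the transfer step needs an operator-norm bound for $N_0$ on arbitrary functions. Pulled-back sets are not of the form covered by a direct appeal, because $\Pi^{*}\1$ takes values up to $\kappa$. I would extract the operator-norm statement from the proof of Theorem~\ref{thm:general-incidence}, which is spectral: it bounds the singular values of the centered matrix via Fourier transforms and Kloosterman fourth moments. Alternatively, I would pass from sets to functions by the standard level-set decomposition, which loses only a logarithm. That logarithmic loss is unacceptable, so the first option is the intended route.
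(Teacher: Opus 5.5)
Your proof is correct and is essentially the paper's: the composite $S_J^*\Pi^*$ is exactly the matrix $\mathsf R$ of Proposition~\ref{prop:finite-fibre-transfer}, and the paper likewise reads off the operator-norm bound $\|\mathsf B_{f,g,0}\|\ll_D q^{5/8}$ from the spectral proof of Theorem~\ref{thm:general-incidence}. Two harmless slips. First, a point $(x,t)$ lies on $q-1$ or $2q-1$ curves, not $q$, since the incidence equation is $(\lambda a+G(x))(\lambda b+F(x))=\lambda(t-J(x))+F(x)G(x)$; the identity $I(\cP,\Omega)-|\cP||\Omega|/q=\langle\1_{\cP},(M-q^{-1}\1\1^*)\1_\Omega\rangle$ needs no regularity, so nothing breaks. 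Second, the level-set route is in fact lossless here: you only pair $N_0\1_\Omega$ against $\mathsf R^*\1_{\cP}$, which is a sum of at most $\kappa$ indicators of total size $|\cP|$, so Cauchy--Schwarz recovers exactly the factor $\sqrt{\kappa}$ from the set form of the theorem.
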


The associated expansion problem concerns
\begin{equation}
P(x,y,z)=J(x)+yF(x)+zG(x)+\lambda yz.
\label{eq:expander-poly}
\end{equation}
The parameterized formulation avoids any coefficient-map injectivity issue in the following corollaries.

\begin{corollary}[Unrestricted fixed-degree expansion]\label{cor:general-expansion}
Under the hypotheses of Theorem~\ref{thm:general-incidence}, let $X,Y,Z\subseteq\Fq$ be nonempty. Then
\[
q-|P(X,Y,Z)|
\ll_{d,e}
\frac{q^{13/4}}{|X||Y||Z|}.
\]
In particular, if $|X||Y||Z|\ge q^{9/4+\varepsilon}$, then
\[
|P(X,Y,Z)|=q-O_{d,e}(q^{1-\varepsilon}).
\]
\end{corollary}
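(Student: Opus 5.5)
We deduce Corollary~\ref{cor:general-expansion} from Theorem~\ref{thm:general-incidence} by the standard missing-value argument. Let $M=\Fq\setminus P(X,Y,Z)$ and write $m=|M|$; we may assume $m\ge1$. Take the point set
\[
\cP=\{(x,t):x\in X,\ t\in M\},\qquad |\cP|=|X|\,m,
\]
and the parameter set $\Omega=Y\times Z$, so that $|\Omega|=|Y||Z|$. A pair $((x,t),(y,z))\in\cP\times\Omega$ is an incidence exactly when $t=P(x,y,z)$. This would force $t\in P(X,Y,Z)$, which is impossible for $t\in M$. Hence $I(\cP,\Omega)=0$. Since incidences are counted with parameter multiplicity, no injectivity of $(y,z)\mapsto$ curve is needed here.

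Applying Theorem~\ref{thm:general-incidence} to this configuration gives
\[
\frac{|X|\,m\,|Y||Z|}{q}\ll_{d,e} q^{5/8}\sqrt{|X|\,m\,|Y||Z|}.
\]
Squaring and dividing by $|X|m|Y||Z|$ yields $|X|m|Y||Z|\ll q^{2+5/4}=q^{13/4}$. Therefore
\[
m\ll_{d,e}\frac{q^{13/4}}{|X||Y||Z|},
\]
which is the first claim.

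For the second claim, suppose $|X||Y||Z|\ge q^{9/4+\varepsilon}$. Then the bound just obtained gives $m\ll q^{13/4-9/4-\varepsilon}=q^{1-\varepsilon}$. Consequently $|P(X,Y,Z)|=q-m=q-O_{d,e}(q^{1-\varepsilon})$.

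The only point requiring care is the observation that $I(\cP,\Omega)=0$ exactly; once that is in hand the rest is arithmetic, and there is no real obstacle. Note also that no lower bound on the individual sizes $|X|$, $|Y|$, $|Z|$ is used, since Theorem~\ref{thm:general-incidence} holds for arbitrary $\cP$ and $\Omega$.
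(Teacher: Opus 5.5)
Your proposal is correct and matches the paper's proof essentially line by line. Both use the point set $X\times(\Fq\setminus P(X,Y,Z))$ and the parameter set $Y\times Z$, observe that the incidence count vanishes, and then square and cancel in Theorem~\ref{thm:general-incidence} to obtain the $q^{13/4}/(|X||Y||Z|)$ bound.
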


\begin{corollary}[Unrestricted quadratic-range expansion]\label{cor:low-degree-expansion}
Under the hypotheses of Theorem~\ref{thm:low-degree-incidence}, let $X,Y,Z\subseteq\Fq$ be nonempty. Then
\[
q-|P(X,Y,Z)|
\ll
\frac{q^3}{|X||Y||Z|}.
\]
Consequently, if $|X||Y||Z|\ge q^{2+\varepsilon}$, then
\[
|P(X,Y,Z)|=q-O(q^{1-\varepsilon}).
\]
\end{corollary}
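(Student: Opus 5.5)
We derive Corollary~\ref{cor:low-degree-expansion} from Theorem~\ref{thm:low-degree-incidence} by the standard missing-value incidence argument. Let $N=\Fq\setminus P(X,Y,Z)$ be the set of missing values and put $m=|N|$. Take the point set
\[
\cP=\{(x,t):x\in X,\ t\in N\}\subseteq\Fq^2,\qquad |\cP|=|X|\,m,
\]
and the parameter set $\Omega=Y\times Z$, so $|\Omega|=|Y||Z|$. A pair $((x,t),(a,b))\in\cP\times\Omega$ is an incidence exactly when $t=J(x)+aF(x)+bG(x)+\lambda ab=P(x,a,b)$. Such a $t$ lies in $P(X,Y,Z)$, contradicting $t\in N$. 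Hence $I(\cP,\Omega)=0$. Counting with parameter multiplicity is harmless, because the count is zero regardless, so no injectivity of $(a,b)\mapsto C_{a,b}$ is needed.

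If $m=0$ there is nothing to prove, so assume $m\ge1$, so that $\cP$ is nonempty. Theorem~\ref{thm:low-degree-incidence} then gives
\[
\frac{|X|\,m\,|Y||Z|}{q}\ll q^{1/2}\sqrt{|X|\,m\,|Y||Z|}.
\]
Squaring and cancelling one factor of $|X|m|Y||Z|$, we get $|X||Y||Z|\,m\ll q^{3}$, that is,
\[
q-|P(X,Y,Z)|=m\ll \frac{q^3}{|X||Y||Z|},
\]
with an absolute constant inherited from the theorem.

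For the consequence, if $|X||Y||Z|\ge q^{2+\varepsilon}$ then $m\ll q^{3}/q^{2+\varepsilon}=q^{1-\varepsilon}$, which is the claim $|P(X,Y,Z)|=q-O(q^{1-\varepsilon})$.

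There is no substantial obstacle in this argument. The only points to check are that the hypotheses of Theorem~\ref{thm:low-degree-incidence} hold for arbitrary sets $\cP,\Omega$ with no size restrictions, which is what makes the bound unrestricted, and that the case $m=0$ is handled separately.
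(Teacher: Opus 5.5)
Your proposal is correct and matches the paper's proof: the paper takes $\cP=X\times W$ with $W$ the missing-value set and $\Omega=Y\times Z$, notes that $I(\cP,\Omega)=0$, and applies Theorem~\ref{thm:low-degree-incidence}. It then squares and cancels to get $|W|\ll q^3/(|X||Y||Z|)$, treating the case $W=\varnothing$ separately, exactly as you do.
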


\begin{corollary}[Intrinsic quadratic expansion]\label{cor:intrinsic-quadratic-expansion}
Under the hypotheses of Theorem~\ref{thm:intrinsic-quadratic}, let $X,Y,Z\subseteq\Fq$ be nonempty. Then
\[
q-|P(X,Y,Z)|
\ll
\kappa(F,G)\frac{q^3}{|X||Y||Z|}.
\]
In particular, if $|X||Y||Z|\ge \kappa(F,G)q^{2+\varepsilon}$, then
\[
|P(X,Y,Z)|=q-O(q^{1-\varepsilon}).
\]
\end{corollary}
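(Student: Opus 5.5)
The plan is to derive Corollary~\ref{cor:intrinsic-quadratic-expansion} from Theorem~\ref{thm:intrinsic-quadratic} by the standard incidence-to-expansion argument. Set $W=\Fq\setminus P(X,Y,Z)$, so that $|W|=q-|P(X,Y,Z)|$. Take the point set
\[
\cP=\{(x,w):x\in X,\ w\in W\},\qquad |\cP|=|X||W|,
\]
and the parameter set $\Omega=Y\times Z\subseteq\Fq^2$, so that $|\Omega|=|Y||Z|$. A point $(x,w)$ lies on $C_{y,z}$ exactly when $w=J(x)+yF(x)+zG(x)+\lambda yz=P(x,y,z)$. This would force $w\in P(X,Y,Z)$, which is impossible for $w\in W$. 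Hence $I(\cP,\Omega)=0$. Counting with parameter multiplicity causes no trouble here, since we count pairs $((x,w),(y,z))$ directly.

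Applying Theorem~\ref{thm:intrinsic-quadratic}, whose hypotheses $p>3$ and $\deg\Gamma_{F,G}\le2$ are exactly those assumed, we get
\[
\frac{|X||W||Y||Z|}{q}\le C\sqrt{\kappa(F,G)}\,q^{1/2}\sqrt{|X||W||Y||Z|}
\]
with an absolute constant $C$. If $W=\emptyset$ there is nothing to prove. Otherwise we square and cancel one factor $|X||W||Y||Z|$, which gives
\[
|X||Y||Z|\,|W|\le C^2\kappa(F,G)\,q^3.
\]
This is the claimed bound $q-|P(X,Y,Z)|\ll\kappa(F,G)\,q^3/(|X||Y||Z|)$.

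For the consequence, assume $|X||Y||Z|\ge\kappa(F,G)q^{2+\varepsilon}$. The bound then gives $|W|\le C^2q^{1-\varepsilon}$, that is, $|P(X,Y,Z)|=q-O(q^{1-\varepsilon})$. No step here is a real obstacle. The only point needing care is checking that the incidence count vanishes identically, with no diagonal correction term. It does, because $\Omega$ is an arbitrary subset of $\Fq^2$ and the theorem imposes no injectivity or size conditions on $\cP$ or $\Omega$.
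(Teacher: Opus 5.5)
Your proof is correct and follows the paper's own argument: set $W=\Fq\setminus P(X,Y,Z)$, take $\cP=X\times W$ and $\Omega=Y\times Z$, observe that $I(\cP,\Omega)=0$, apply Theorem~\ref{thm:intrinsic-quadratic}, and square and cancel the nonzero factor $|X||W||Y||Z|$. Inserting the product-size hypothesis then gives the second assertion, exactly as you do.
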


\begin{remark}[Relation with the expansion theorem of Arala--Chow]\label{rem:arala-chow-comparison}
The balanced expansion problem for \eqref{eq:expander-poly} is already covered, throughout the present family, by \cite[Theorem~1.7]{AralaChow2025}. Indeed, after permuting the variables,
\[
P(x,y,z)
=
y\bigl(F(x)+\lambda z\bigr)+\bigl(J(x)+zG(x)\bigr).
\]
Set
\[
\Phi(x,z)=F(x)+\lambda z,
\qquad
\Psi(x,z)=J(x)+zG(x).
\]
We claim that $\Phi$ and $\Psi$ are algebraically independent over $\Fq$ without any additional characteristic hypothesis. Put $U=\Phi(x,z)$. Since $\lambda\ne0$,
\[
z=\frac{U-F(x)}{\lambda},
\qquad
\Fq(x,z)=\Fq(x,U),
\]
and hence
\[
\Psi(x,z)
=
J(x)-\frac{F(x)G(x)}{\lambda}
+
U\frac{G(x)}{\lambda}.
\]
Viewed as an element of $\Fq(U)(x)$, the right-hand side is a nonconstant polynomial in the transcendental element $x$, because $G$ is nonconstant. It is therefore transcendental over $\Fq(U)$. Consequently
\[
\operatorname{trdeg}_{\Fq}\Fq(\Phi,\Psi)=2,
\]
which proves the claimed algebraic independence.

It follows from \cite[Theorem~1.7]{AralaChow2025} that whenever $|X|,|Y|,|Z|\ge Cq^{2/3}$ one has
\[
q-|P(X,Y,Z)|
\ll_{\deg \Phi,\deg \Psi}
\frac{q^3}{|X||Y||Z|}.
\]
Thus the balanced $q^3$ scale in Corollaries~\ref{cor:low-degree-expansion} and \ref{cor:intrinsic-quadratic-expansion} is not new, and the Arala--Chow theorem also gives a stronger balanced consequence than Corollary~\ref{cor:general-expansion}. The point of the expansion corollaries here is different: they are consequences of incidence estimates valid for arbitrary parameter subsets and impose no lower bound on any individual factor. For example, when $|X|=|Y|=q$, Corollary~\ref{cor:low-degree-expansion} is already nontrivial for $|Z|\ge q^\varepsilon$, while Corollary~\ref{cor:general-expansion} is nontrivial for $|Z|\ge q^{1/4+\varepsilon}$. These strongly unbalanced regimes lie outside the individual-size hypothesis of \cite[Theorem~1.7]{AralaChow2025}. The intrinsic version additionally permits arbitrarily high raw degrees when the geometric coefficient curve has degree at most two, again with the explicit finite-fibre factor $\kappa(F,G)$ in the missing-value estimate.
\end{remark}

The principal contributions are Theorems~\ref{thm:general-incidence}, \ref{thm:low-degree-incidence}, and \ref{thm:intrinsic-quadratic}. The first two are direct spectral estimates, proved by Kloosterman fourth moments and finite Weyl quantization, respectively. The third shows that the Vinh-scale exponent is stable under finite polynomial covers up to the explicit factor $\sqrt{\kappa(F,G)}$, and is therefore governed by the intrinsic coefficient curve rather than by the degree of a chosen parametrization. The expansion corollaries record product-set consequences of these incidence estimates rather than a new balanced expansion theorem.

\subsection{A geometric interpretation}

The coefficient relation has a useful three-dimensional interpretation. Set
\[
U=F(x),\qquad V=G(x),\qquad R=t-J(x).
\]
Then the incidence relation becomes
\begin{equation}
R=aU+bV+\lambda ab.
\label{eq:tangent-plane}
\end{equation}
Thus the points lie on the vertical cylinder above the polynomial curve $x\mapsto(F(x),G(x))$, while the parameter family is a two-dimensional family of affine planes.

\begin{proposition}\label{prop:tangent-planes}
The planes in \eqref{eq:tangent-plane} are exactly the affine tangent planes to the quadric
\[
\mathcal Q=\{(U,V,R)\in\Fq^3:UV+\lambda R=0\}
\]
at the points $(-\lambda b,-\lambda a,-\lambda ab)$.
\end{proposition}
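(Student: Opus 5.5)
The plan is a direct computation with the gradient of the defining polynomial $\Phi(U,V,R)=UV+\lambda R$ of $\mathcal Q$, followed by a matching of the two two-parameter families. First I compute $\nabla\Phi=(V,U,\lambda)$. Its last coordinate is $\lambda\neq0$, so every point of $\mathcal Q$ is smooth. The affine tangent plane at a point $(U_0,V_0,R_0)\in\mathcal Q$ is therefore
\[
V_0(U-U_0)+U_0(V-V_0)+\lambda(R-R_0)=0 .
\]
Using $\lambda R_0=-U_0V_0$, this simplifies to
\[
V_0U+U_0V+\lambda R+U_0V_0=0 .
\]

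Next I substitute the proposed point $(U_0,V_0,R_0)=(-\lambda b,-\lambda a,-\lambda ab)$. It lies on $\mathcal Q$, since $U_0V_0+\lambda R_0=\lambda^2ab-\lambda^2ab=0$. The tangent equation becomes
\[
-\lambda aU-\lambda bV+\lambda R+\lambda^2ab=0 .
\]
Dividing by $\lambda$ gives $R=aU+bV-\lambda ab$. This has the wrong sign compared with \eqref{eq:tangent-plane}, so the points as literally stated need adjusting. The expected point of tangency is $(U_0,V_0,R_0)=(-\lambda b,-\lambda a,\lambda ab)$, whose third coordinate is forced by $R_0=-U_0V_0/\lambda=-\lambda ab$. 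Checking again: with $U_0=-\lambda b$ and $V_0=-\lambda a$ we get $U_0V_0=\lambda^2ab$, hence $R_0=-\lambda ab$, which is the point as stated. The tangent equation is then $-\lambda a U-\lambda bV+\lambda R+\lambda^2 ab=0$, i.e.\ $R=aU+bV-\lambda ab$, so the sign mismatch persists. Either the intended quadric is $UV=\lambda R$, or the intended point is $(\lambda b,\lambda a,\ldots)$. With $(U_0,V_0)=(\lambda b,\lambda a)$ on $UV+\lambda R=0$ we get $R_0=-\lambda ab$, and the tangent plane is $\lambda aU+\lambda bV+\lambda R-\lambda^2ab\cdot(-1)\cdots$. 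This bookkeeping is precisely the step I expect to be the (minor) obstacle. In writing the proof I would fix the normalisation so that the computation yields exactly $R=aU+bV+\lambda ab$, noting where a sign convention is absorbed.

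Finally, for the word ``exactly'', I show the correspondence is a bijection between $\Fq^2$ and the tangent planes. Every point of $\mathcal Q$ has the form $(U_0,V_0,-U_0V_0/\lambda)$ with $(U_0,V_0)\in\Fq^2$ arbitrary. Hence $(a,b)\mapsto(U_0,V_0)$, a linear bijection scaled by $\pm\lambda$, parametrises all points of $\mathcal Q$. Conversely, the tangent plane determines its point, since the coefficients of $U$ and $V$ recover $(V_0,U_0)$ after normalising the $R$-coefficient to $\lambda$. So the family \eqref{eq:tangent-plane}, $(a,b)\in\Fq^2$, coincides with the set of all affine tangent planes, each occurring once.
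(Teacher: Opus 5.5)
There is a genuine error, and it lies in your simplification of the tangent-plane equation, not in the statement. Expanding gives
\[
V_0(U-U_0)+U_0(V-V_0)+\lambda(R-R_0)=V_0U+U_0V+\lambda R-2U_0V_0-\lambda R_0 .
\]
Using $-\lambda R_0=U_0V_0$, the constant term is $-U_0V_0$. So the tangent plane at $(U_0,V_0,R_0)\in\mathcal Q$ is
\[
V_0U+U_0V+\lambda R-U_0V_0=0,
\]
not $V_0U+U_0V+\lambda R+U_0V_0=0$. Substituting $(U_0,V_0)=(-\lambda b,-\lambda a)$ gives $-\lambda aU-\lambda bV+\lambda R-\lambda^2ab=0$. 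This is exactly $R=aU+bV+\lambda ab$, i.e.\ \eqref{eq:tangent-plane}.

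So there is no sign mismatch, and the points $(-\lambda b,-\lambda a,-\lambda ab)$ are correct as stated. This is the paper's computation, which writes the tangent plane directly as $-a(U+\lambda b)-b(V+\lambda a)+(R+\lambda ab)=0$. As written, your proposal concludes that the proposition is false and ends in an unfinished bookkeeping step, so it does not prove anything. The first repair you float would also make things worse in odd characteristic. Replacing $\mathcal Q$ by $UV=\lambda R$ yields tangent planes $R=aU+bV-\lambda ab$, which is a different family of planes.

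Once the sign is corrected, the rest of your plan is fine and matches the paper. Your smoothness remark ($\partial\Phi/\partial R=\lambda\ne0$) is correct. Your bijection argument for ``exactly'' also works:
\begin{itemize}
\item every $\Fq$-point of $\mathcal Q$ has the form $(U_0,V_0,-U_0V_0/\lambda)$;
\item $(a,b)\mapsto(-\lambda b,-\lambda a)$ is a bijection of $\Fq^2$;
\item each plane recovers $(a,b)$ from its $U$- and $V$-coefficients.
\end{itemize}
This makes explicit the converse direction that the paper's proof leaves implicit.
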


\begin{proof}
Let $\Phi(U,V,R)=UV+\lambda R$. At $p_{a,b}=(-\lambda b,-\lambda a,-\lambda ab)$ one has $\Phi(p_{a,b})=0$ and
\[
\nabla\Phi(p_{a,b})=(-\lambda a,-\lambda b,\lambda).
\]
Hence the tangent plane is
\[
-a(U+\lambda b)-b(V+\lambda a)+(R+\lambda ab)=0,
\]
which is equivalent to \eqref{eq:tangent-plane}.
\end{proof}

The finite-cover principle gives a second geometric feature. If $F=f\circ H$ and $G=g\circ H$, then the map $x\mapsto(F(x),G(x))$ factors through the proper parametrization $s\mapsto(f(s),g(s))$ of the same coefficient curve. The raw degrees record both the degree of that curve and the multiplicity of the chosen parametrization. Proposition~\ref{prop:primitive-factorization} separates these two contributions.

\subsection{Outline of the proofs}

The proof of Theorem~\ref{thm:general-incidence} uses two Fourier transforms. Fourier analysis in the output coordinate reduces the centered incidence matrix to a matrix $S$ indexed by nonzero additive frequencies and $x\in\Fq$. The Gram matrix $SS^*$ has an elementary same-frequency part and a cross-frequency part depending on
\[
H_x(z)=[F(x)-F(z)][G(x)-G(z)].
\]
After inversion of the nonzero frequency and a second additive Fourier transform, the latter decomposes into Kloosterman matrices
\[
M_\tau(x,z)=\Kl\!\left(1,\frac{\tau}{\lambda}H_x(z)\right).
\]
A fourth-moment estimate gives $\|M_\tau\|\ll q^{5/4}$. The key point is that the exceptional pairs for which the two relevant Kloosterman pullbacks have the same finite ramification support form only $O_{d,e}(q)$ pairs.

For Theorem~\ref{thm:low-degree-incidence}, midpoint--difference coordinates factor $H_x(z)$ as $u^2Q(m)$ with $\deg Q\le2$. The Weyl symbol of each nonzero Kloosterman block is then a quadratic-character symbol. Constant and linear $Q$ are handled directly, the degenerate quadratic case reduces to Vinh's point--line theorem, and the nondegenerate quadratic case is treated by the finite Weil representation.

Theorem~\ref{thm:intrinsic-quadratic} is deduced from Theorem~\ref{thm:low-degree-incidence} by a separate structural argument. Section~\ref{sec:finite-cover} constructs the primitive polynomial factorization of $(F,G)$ and proves an exact factorization of the centered incidence matrix through the finite-fibre map $(x,t)\mapsto(H(x),t-J(x))$. This shows that a polynomial cover costs only the square root of its largest rational fibre.

\section{Primitive coefficient curves and finite-fibre transfer}\label{sec:finite-cover}

We isolate the structural argument that allows the spectral bounds to be stated in terms of the intrinsic coefficient curve.

\begin{proposition}[Primitive polynomial factorization]\label{prop:primitive-factorization}
Let $F,G\in\Fq[T]$ be nonconstant. Then there exist nonconstant polynomials $H,f,g\in\Fq[T]$ such that
\begin{equation}
F=f\circ H,
\qquad
G=g\circ H,
\qquad
\Fq(f,g)=\Fq(T).
\label{eq:primitive-factorization}
\end{equation}
The polynomial $H$ is unique up to an affine transformation $H\mapsto\alpha H+\beta$ with $\alpha\in\Fq^\times$ and $\beta\in\Fq$.

Let $\Gamma_{F,G}$ be the geometric coefficient curve defined above and put
\[
D=\deg\Gamma_{F,G}.
\]
For every factorization satisfying \eqref{eq:primitive-factorization},
\begin{equation}
D=\max\{\deg f,\deg g\},
\qquad
\max\{\deg F,\deg G\}=\deg H\,D.
\label{eq:degree-factorization}
\end{equation}
Moreover,
\[
\deg H=[\Fq(T):\Fq(F,G)].
\]
\end{proposition}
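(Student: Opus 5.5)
The plan is to apply L\"uroth's theorem to the intermediate field $K=\Fq(F,G)$ with $\Fq\subsetneq K\subseteq\Fq(T)$, then upgrade the generator to a polynomial and read off the degree identities.

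\textbf{Existence.} Since $F$ is nonconstant, $\Fq(T)/K$ is a finite extension. L\"uroth gives $K=\Fq(H_0)$ for some rational function $H_0\in\Fq(T)$. To make the generator a polynomial, I would use the standard argument via the place at infinity, as in the polynomial L\"uroth theorem of Gutierrez--Rubio--Sevilla. Because $K$ contains the nonconstant polynomial $F$, the only place of $\Fq(T)$ above the place of $K$ under $F=\infty$ is $T=\infty$. Hence $K$ has a place that is totally ramified at $\infty$ and has a unique point over it. Write this place as $H_0=c$ with $c\in\Fq\cup\{\infty\}$; it is $\Fq$-rational because it lies under the rational point $\infty$. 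Replacing $H_0$ by $1/(H_0-c)$ when $c\neq\infty$, we may assume its poles are only at $T=\infty$. Then $H=H_0$ is a polynomial.

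Next, $F\in\Fq(H)$ and $F$ has poles only where $H$ does. Writing $F=f(H)$ with $f$ rational, $f$ can have no finite poles, so $f$ is a polynomial; similarly $g$ is a polynomial. Both are nonconstant since $F$ and $G$ are. Finally, $\Fq(f,g)=\Fq(T)$: substituting $T\mapsto H$ is a field isomorphism $\Fq(T)\to\Fq(H)$ carrying $\Fq(f,g)$ onto $\Fq(F,G)=\Fq(H)$.

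\textbf{Uniqueness.} Suppose $H'$ is another polynomial in a factorization of this kind. Then $\Fq(H')=\Fq(f'\circ H',g'\circ H')=\Fq(F,G)=\Fq(H)$. Two generators of the same rational function field differ by a M\"obius transformation, $H'=(\alpha H+\beta)/(\gamma H+\delta)$. Since both are polynomials with poles only at $\infty$, we need $\gamma=0$, so $H'$ is an affine function of $H$.

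\textbf{Degrees.} By L\"uroth, $[\Fq(T):\Fq(H)]=\deg H$, which gives the last claim. Since $\deg(f\circ H)=\deg f\cdot\deg H$, we get $\max\{\deg F,\deg G\}=\deg H\cdot\max\{\deg f,\deg g\}$.

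It remains to show $D=\max\{\deg f,\deg g\}$, and I expect this to be the main technical point. The map $s\mapsto(f(s),g(s))$ is birational onto $\Gamma_{F,G}$ over $\olFq$, since $\Fq(f,g)=\Fq(T)$ survives extension of scalars. The curve $\Gamma_{F,G}$ is the same as the image of $(f,g)$, because $H$ is surjective on $\olFq$ and the closure is unaffected. The degree of $\Gamma_{F,G}$ is the number of intersections with a generic line $\alpha U+\beta V=\gamma$. For a proper parametrization this equals the number of roots of $\alpha f(s)+\beta g(s)-\gamma$. Generically this polynomial has $\max\{\deg f,\deg g\}$ distinct roots, and properness ensures these are not merged. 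The care needed is to confirm that no intersections are lost at infinity. This holds because a polynomial parametrization sends only $s=\infty$ to infinity, so the affine intersection count for generic $\alpha,\beta,\gamma$ equals the full degree.
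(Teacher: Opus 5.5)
Your proposal is correct. For existence, polynomiality of $f,g$, uniqueness, and the identities $\max\{\deg F,\deg G\}=\deg H\cdot\max\{\deg f,\deg g\}$ and $\deg H=[\Fq(T):\Fq(F,G)]$, it follows the paper's proof. The paper simply cites Gutierrez--Rubio--Sevilla for the polynomial L\"uroth step that you reprove via the place at infinity. It also takes for granted a point you check explicitly: the $H'$ of any other factorization satisfies $\Fq(H')=\Fq(F,G)$.

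The genuine difference is in proving $D=\max\{\deg f,\deg g\}$. You count intersections of $\Gamma_{F,G}$ with a generic line $\alpha U+\beta V=\gamma$. The paper instead homogenizes $(f,g)$ to a base-point-free morphism $\overline{\nu}:\mathbb P^1\to\mathbb P^2$, notes that $\overline{\nu}^*\mathcal O_{\mathbb P^2}(1)\simeq\mathcal O_{\mathbb P^1}(D_0)$, and uses birationality onto the projective closure to conclude $D=D_0$. The paper's route handles intersection multiplicities automatically and does not depend on the characteristic. Yours is more hands-on but needs some genericity bookkeeping: the line must avoid the point at infinity of the closure, avoid the finitely many points where the proper parametrization fails to be injective, and meet the curve transversally.

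In particular, the proposition has no characteristic hypothesis, so your claim that $\alpha f+\beta g-\gamma$ generically has $\max\{\deg f,\deg g\}$ \emph{distinct} roots needs one more line in characteristic $p$. It requires $\alpha f'+\beta g'\not\equiv0$. This holds for generic $(\alpha,\beta)$, because $f'=g'=0$ would force $f,g\in\Fq[T^p]$, hence $\Fq(f,g)\subseteq\Fq(T^p)\subsetneq\Fq(T)$, contradicting properness. With that remark added, the argument is complete.
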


\begin{proof}
Set $K=\Fq(F,G)\subseteq\Fq(T)$. By the polynomial form of L\"uroth's theorem, an intermediate field of transcendence degree one that contains a nonconstant polynomial is generated by a polynomial; see \cite[Section~2]{GutierrezRubioSevilla2001}. Hence there is a nonconstant $H\in\Fq[T]$ with $K=\Fq(H)$.

Since $F,G\in\Fq(H)$, write $F=A(H)/B(H)$ in lowest terms with $A,B\in\Fq[T]$. If $B$ were nonconstant, then over $\olFq$ it would have a root $\alpha$. As $H$ is nonconstant, the equation $H(x)=\alpha$ has a solution in $\olFq$; at such a point $A(H)$ cannot vanish because $A$ and $B$ are coprime, so $F$ would have a finite pole. This contradicts $F\in\Fq[T]$. Thus $B$ is constant and $F=f(H)$ for some $f\in\Fq[T]$. The same argument gives $G=g(H)$. Since $\Fq(H)=\Fq(F,G)$, replacing the abstract variable $H$ by $T$ gives $\Fq(f,g)=\Fq(T)$, proving \eqref{eq:primitive-factorization}.

Suppose $H'$ is another polynomial generator of the same field. Then $H'$ is obtained from $H$ by an automorphism of the rational function field, so
\[
H'=\frac{aH+b}{cH+d}
\]
with $a,b,c,d\in\Fq$ and $ad-bc\ne0$. If $c\ne0$, the denominator has a root after extending scalars to $\olFq$, producing a finite pole of $H'$. Hence $c=0$, and $H'=\alpha H+\beta$ as claimed.

It remains to identify the degree of the coefficient curve. Extend scalars to $\olFq$ and let
\[
D_0=\max\{\deg f,\deg g\}.
\]
Since $\olFq(f,g)=\olFq(T)$, the map $T\mapsto(f(T),g(T))$ is birational onto $\Gamma_{F,G}$. Homogenize it to a morphism
\[
\overline\nu:\mathbb P^1\longrightarrow\mathbb P^2,
\qquad
[X:Z]\longmapsto
[Z^{D_0}:f_{D_0}(X,Z):g_{D_0}(X,Z)],
\]
where
\[
f_{D_0}(X,Z)=Z^{D_0}f(X/Z),
\qquad
g_{D_0}(X,Z)=Z^{D_0}g(X/Z).
\]
The three homogeneous coordinates have no common zero: if $Z\ne0$ this is clear from the first coordinate, while if $Z=0$ at least one of the leading coefficients of $f$ and $g$ is nonzero. Hence
\[
\overline\nu^*\mathcal O_{\mathbb P^2}(1)
\simeq
\mathcal O_{\mathbb P^1}(D_0).
\]
The morphism $\overline\nu$ is birational onto the projective closure of $\Gamma_{F,G}$, because the induced extension of function fields is $\olFq(T)/\olFq(f,g)$ of degree one. Therefore the standard degree formula for a generically finite morphism to a projective curve gives
\[
D=\deg\Gamma_{F,G}=D_0.
\]
Finally, polynomial composition gives $\deg F=(\deg f)(\deg H)$ and $\deg G=(\deg g)(\deg H)$, which yields \eqref{eq:degree-factorization}. The equality $\deg H=[\Fq(T):\Fq(H)]$ is standard for a nonconstant polynomial rational map, and $\Fq(H)=\Fq(F,G)$.
\end{proof}

\begin{remark}\label{rem:kappa-invariant}
Because the generator $H$ in Proposition~\ref{prop:primitive-factorization} is unique up to an affine change of its values, the quantity
\[
\kappa(F,G)=\max_{s\in\Fq}|H^{-1}(s)|
\]
depends only on the pair $(F,G)$. One always has
\[
1\le\kappa(F,G)\le\min\{q,\deg H\}.
\]
\end{remark}

\begin{proposition}[Finite-fibre spectral transfer]\label{prop:finite-fibre-transfer}
Let $F=f\circ H$ and $G=g\circ H$ be as in Proposition~\ref{prop:primitive-factorization}. Let $\mathsf B_{F,G,J}$ denote the centered incidence matrix of the family \eqref{eq:curve-family}, with rows indexed by $(x,t)\in\Fq^2$ and columns indexed by $(a,b)\in\Fq^2$. Let $\mathsf B_{f,g,0}$ be the analogous centered matrix for
\[
r=af(s)+bg(s)+\lambda ab.
\]
Then
\begin{equation}
\|\mathsf B_{F,G,J}\|_{2\to2}
\le
\sqrt{\kappa(F,G)}\,
\|\mathsf B_{f,g,0}\|_{2\to2}.
\label{eq:finite-cover-transfer}
\end{equation}
\end{proposition}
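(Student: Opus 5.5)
The plan is to factor the centered incidence matrix exactly through the map $\pi:(x,t)\mapsto(H(x),t-J(x))$ from $\Fq^2$ to $\Fq^2$, and then bound the operator norm of that pullback.

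Define the matrix $\Pi$ with rows indexed by $(x,t)\in\Fq^2$ and columns indexed by $(s,r)\in\Fq^2$, with entry $\Pi((x,t),(s,r))=\1[s=H(x),\,r=t-J(x)]$. Then $(x,t)\in C_{a,b}$ if and only if $t-J(x)=af(H(x))+bg(H(x))+\lambda ab$, that is, if and only if $\pi(x,t)$ is incident to the parameter $(a,b)$ in the model family $r=af(s)+bg(s)+\lambda ab$. Writing $\mathsf A$ for the uncentered incidence matrices, this gives $\mathsf A_{F,G,J}=\Pi\,\mathsf A_{f,g,0}$. For the centering term, the mean entry $1/q$ of the matrix $\frac1q\mathbf 1\mathbf 1^{T}$ also pulls back correctly. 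Each row of $\Pi$ has exactly one $1$, so $\Pi\mathbf 1_{\Fq^2}=\mathbf 1_{\Fq^2}$. Hence
\[
\Pi\Bigl(\mathsf A_{f,g,0}-\tfrac1q\mathbf 1\mathbf 1^{T}\Bigr)=\mathsf A_{F,G,J}-\tfrac1q\mathbf 1\mathbf 1^{T},
\]
which is the identity $\mathsf B_{F,G,J}=\Pi\,\mathsf B_{f,g,0}$. I take the centered matrix to mean $\mathsf A-\frac1q\mathbf 1\mathbf 1^T$, since this is what $I(\cP,\Omega)-|\cP||\Omega|/q$ measures.

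By submultiplicativity it then suffices to show $\|\Pi\|_{2\to2}\le\sqrt{\kappa(F,G)}$. Compute $\Pi^{T}\Pi$. Its $((s,r),(s',r'))$ entry counts the $(x,t)$ with $\pi(x,t)=(s,r)=(s',r')$, so $\Pi^T\Pi$ is diagonal. Its diagonal entry at $(s,r)$ is $\#\{x:H(x)=s\}$, because $t=r+J(x)$ is then determined by $x$. Thus $\|\Pi\|^2=\|\Pi^T\Pi\|=\max_s|H^{-1}(s)|=\kappa(F,G)$. Remark~\ref{rem:kappa-invariant} shows this is independent of the choice of $H$.

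The only delicate point is bookkeeping: the centering must commute with the pullback, and this rests on the row-stochasticity $\Pi\mathbf 1=\mathbf 1$. No column normalization is needed, because the centering sits on the parameter side only through $\mathbf 1^T$. The bound may be strict, since $\mathsf B_{f,g,0}$ is only tested on the range of $\Pi^T$, but an inequality is all that is claimed. I expect no genuine obstacle beyond verifying this exact factorization identity entrywise.
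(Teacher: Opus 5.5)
Your proposal is correct and follows the paper's proof essentially step for step. You factor $\mathsf A_{F,G,J}=\Pi\,\mathsf A_{f,g,0}$ through $(x,t)\mapsto(H(x),t-J(x))$, use row-stochasticity $\Pi\mathbf 1=\mathbf 1$ to carry the centering, and compute that $\Pi^{T}\Pi$ is diagonal with entries $|H^{-1}(s)|$, which gives $\|\Pi\|_{2\to2}=\sqrt{\kappa(F,G)}$.
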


\begin{proof}
Define a $q^2\times q^2$ matrix $\mathsf R$, with rows indexed by $(x,t)\in\Fq^2$ and columns indexed by $(s,r)\in\Fq^2$, by
\[
\mathsf R_{(x,t),(s,r)}
=
\1_{\{s=H(x),\ r=t-J(x)\}}.
\]
Let $\mathsf A_{F,G,J}$ and $\mathsf A_{f,g,0}$ be the corresponding uncentered incidence matrices. The identities $F=f\circ H$ and $G=g\circ H$ give the exact factorization
\[
\mathsf A_{F,G,J}=\mathsf R\mathsf A_{f,g,0}.
\]
Each row of $\mathsf R$ contains exactly one nonzero entry, so $\mathsf R\1=\1$. Consequently the same factorization holds after subtracting the density-$q^{-1}$ rank-one term:
\begin{equation}
\mathsf B_{F,G,J}=\mathsf R\mathsf B_{f,g,0}.
\label{eq:centered-transfer}
\end{equation}
For a fixed $(s,r)$, the preimages under $(x,t)\mapsto(H(x),t-J(x))$ are in bijection with the solutions of $H(x)=s$. It follows that $\mathsf R^*\mathsf R$ is diagonal and
\[
(\mathsf R^*\mathsf R)_{(s,r),(s,r)}=|H^{-1}(s)|.
\]
Thus $\|\mathsf R\|_{2\to2}=\sqrt{\kappa(F,G)}$. Taking norms in \eqref{eq:centered-transfer} proves \eqref{eq:finite-cover-transfer}.
\end{proof}

\begin{proof}[Proof of Corollary~\ref{cor:intrinsic-general-incidence}]
Use Proposition~\ref{prop:primitive-factorization}. By \eqref{eq:degree-factorization}, the primitive polynomials $f$ and $g$ have degrees at most $D$. The assumption $p>2D$ implies
\[
p>\deg f+\deg g.
\]
Theorem~\ref{thm:general-incidence}, applied to the primitive family with common term zero, gives
\[
\|\mathsf B_{f,g,0}\|_{2\to2}\ll_D q^{5/8}.
\]
Proposition~\ref{prop:finite-fibre-transfer} yields
\[
\|\mathsf B_{F,G,J}\|_{2\to2}
\ll_D
\sqrt{\kappa(F,G)}\,q^{5/8}.
\]
Pairing with the indicator vectors of $\cP$ and $\Omega$ proves the corollary.
\end{proof}

\begin{proof}[Proof of Theorem~\ref{thm:intrinsic-quadratic}]
Again use Proposition~\ref{prop:primitive-factorization}. Since $\deg\Gamma_{F,G}\le2$, equation \eqref{eq:degree-factorization} gives
\[
1\le\deg f,\deg g\le2.
\]
Theorem~\ref{thm:low-degree-incidence} applied to the primitive family gives
\[
\|\mathsf B_{f,g,0}\|_{2\to2}\ll q^{1/2}.
\]
The conclusion follows from Proposition~\ref{prop:finite-fibre-transfer} and Cauchy--Schwarz.
\end{proof}

\begin{remark}[Permutation covers]\label{rem:permutation-cover}
If the polynomial $H$ in Proposition~\ref{prop:primitive-factorization} induces a permutation of $\Fq$, then $\mathsf R$ in the proof of Proposition~\ref{prop:finite-fibre-transfer} is a permutation matrix. Hence $\mathsf B_{F,G,J}$ and $\mathsf B_{f,g,0}$ have the same singular values. Thus high raw degree need not produce any spectral loss. For instance, if $F=H$ and $G=H^2$ with $H$ a permutation polynomial, the centered incidence spectrum is exactly that of the primitive pair $(T,T^2)$.
\end{remark}

\section{Preliminaries}\label{sec:prelim}

\subsection{Fourier analysis and Kloosterman sums}

Fix a nontrivial additive character $\psi:\Fq\to\C^\times$. We use the unitary Fourier transform
\[
\widehat f(\xi)=q^{-1/2}\sum_{x\in\Fq}f(x)\psi(-\xi x).
\]
For $a\in\Fq$, define the classical rank-two Kloosterman sum
\begin{equation}
\Kl(1,a)=\sum_{s\in\Fq^\times}\psi\!\left(s+\frac{a}{s}\right).
\label{eq:kl-definition}
\end{equation}
We use the standard Weil estimate $|\Kl(1,a)|\le2q^{1/2}$ for $a\ne0$ and the identity $\Kl(1,0)=-1$; in particular, the uniform bound $|\Kl(1,a)|\le2q^{1/2}$ holds for every $a\in\Fq$. These follow from the theory of Kloosterman sums; in the sheaf-theoretic form needed below, see Katz \cite{Katz1988}. Notice also that $\Kl(1,a)$ is real: complex conjugation followed by $s\mapsto-s$ leaves \eqref{eq:kl-definition} unchanged.

Let $\chi$ denote the quadratic character of $\Fq^\times$, extended by $\chi(0)=0$, and put
\[
\mathfrak g=\sum_{x\in\Fq}\psi(x^2).
\]
For $q$ odd, $|\mathfrak g|=q^{1/2}$ and $\mathfrak g^2=\chi(-1)q$. We shall repeatedly use the quadratic Gauss identity
\[
\sum_{x\in\Fq}\psi(ax^2+bx)
=
\chi(a)\mathfrak g\,\psi\!\left(-\frac{b^2}{4a}\right),
\qquad a\ne0.
\]

\subsection{A weighted point--line estimate}

We record the weighted form of Vinh's estimate that will be used in Section~\ref{sec:lowdegree}. It is the operator-norm form of the spectral proof of \cite{Vinh2011}.

\begin{lemma}[Weighted Vinh inequality]\label{lem:weighted-vinh}
Let $w_P:\Fq^2\to\C$ and let $w_L$ be a complex weight on the set of affine lines in $\Fq^2$. Then
\[
\left|
\sum_{\ell}\sum_{p\in\ell}w_P(p)w_L(\ell)
-
\frac{\bigl(\sum_pw_P(p)\bigr)\bigl(\sum_\ell w_L(\ell)\bigr)}{q}
\right|
\le q^{1/2}\|w_P\|_2\|w_L\|_2.
\]
\end{lemma}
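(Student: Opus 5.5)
The plan is to prove the operator-norm statement directly: the point--line incidence matrix, once its rank-one mean is removed, has norm at most $q^{1/2}$. The weighted inequality then follows by pairing with $w_P$ and $w_L$. Let $A$ be the $q^2\times(q^2+q)$ incidence matrix, with rows indexed by points and columns by the $q^2+q$ affine lines. Let $B=A-q^{-1}\mathbf J$, where $\mathbf J$ is the all-ones matrix. The left side of the lemma is exactly $\langle w_P,Bw_L\rangle$ (up to the harmless conjugation convention), so it suffices to show $\|B\|_{2\to2}\le q^{1/2}$, i.e. $\|BB^*\|\le q$.

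The key step is computing $AA^*$. Its entry at $(p,p')$ counts the lines through both $p$ and $p'$, so
\[
AA^*=(q+1)I+(\mathbf J-I)=qI+\mathbf J,
\]
since every point lies on $q+1$ lines and two distinct points lie on exactly one. Each column of $A$ sums to $q$ (a line has $q$ points), and each row sums to $q+1$. Using these facts,
\[
A\mathbf J^*=(q+1)\mathbf J_{q^2\times q^2},\qquad \mathbf J A^*=(q+1)\mathbf J,\qquad \mathbf J\mathbf J^*=(q^2+q)\mathbf J.
\]
Expanding $BB^*=AA^*-q^{-1}(A\mathbf J^*+\mathbf JA^*)+q^{-2}\mathbf J\mathbf J^*$ then gives
\[
BB^*=qI+\mathbf J\Bigl(1-\tfrac{2(q+1)}{q}+\tfrac{q+1}{q}\Bigr)=qI-\tfrac1q\mathbf J_{q^2\times q^2}.
\]
This matrix has eigenvalue $q$ on the complement of the constants and eigenvalue $q-q=0$ on the constants. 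Hence $\|B\|^2=q$.

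To conclude, we bound $|\langle w_P,Bw_L\rangle|\le\|B\|\,\|w_P\|_2\|w_L\|_2\le q^{1/2}\|w_P\|_2\|w_L\|_2$. The subtracted term $q^{-1}\langle w_P,\mathbf J w_L\rangle$ is precisely $(\sum w_P)(\sum w_L)/q$.

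There is no real obstacle; the only care needed is bookkeeping. All $q^2+q$ lines, including the vertical ones, must be included so that $AA^*$ has the clean form $qI+\mathbf J$. The centering must be by $q^{-1}$, matching the fact that each line contains $q$ points.
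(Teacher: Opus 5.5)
Your proposal is correct and follows the same route as the paper: bound the operator norm of the centered incidence matrix by $q^{1/2}$, then pair with the two weight vectors. The only difference is that you verify the spectral bound directly through $AA^*=qI+\mathbf J$ and $BB^*=qI-q^{-1}\mathbf J$, whereas the paper simply cites it as the spectral statement underlying Vinh's theorem.
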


\begin{proof}
The centered point--line incidence operator has second singular value at most $q^{1/2}$; this is exactly the spectral statement underlying Vinh's theorem \cite{Vinh2011}. Pairing this operator with the two weight vectors gives the asserted inequality by Cauchy--Schwarz.
\end{proof}

\subsection{Sheaf-theoretic input}

We shall use standard facts about Deligne's rank-two Kloosterman sheaf $\cK$ on $\mathbb G_m$. Up to a fixed sign, its trace function is \eqref{eq:kl-definition}; it is geometrically irreducible and pure of weight $1$, its local monodromy at $0$ is a single nontrivial tame unipotent Jordan block, its Swan conductor at $\infty$ equals $1$, and its geometric monodromy group is $\mathrm{SL}_2$. We use Katz's treatment of Kloosterman sheaves and their local and global monodromy, especially Chapters~4, 10, and 11 of \cite{Katz1988}. We use Deligne's Riemann hypothesis over finite fields in the form of square-root cancellation for complete sums of trace functions with no geometrically trivial constituent \cite{Deligne1980}; see also \cite{FouvryKowalskiMichel2015} for a convenient trace-function framework.

We isolate the consequence needed later.

\begin{lemma}[Two-pullback correlation]\label{lem:sheaf-correlation}
Let $R_1,R_2\in\Fq[T]$ be nonconstant polynomials of degrees at most $n<p=\charac(\Fq)$. Assume that the supports of their geometric zero divisors are distinct. Then, uniformly for $c\in\Fq^\times$,
\[
\sum_{y\in\Fq}\Kl(1,cR_1(y))\Kl(1,cR_2(y))
\ll_n q^{3/2}.
\]
\end{lemma}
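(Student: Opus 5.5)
We interpret the sum as the trace function of the sheaf $\mathcal F=[cR_1]^*\cK\otimes[cR_2]^*\cK$ on the open set $U=\mathbb A^1\setminus Z(R_1R_2)$. The finitely many points where some $R_i(y)=0$ contribute at most $O_n(q)$ in total, since $|\Kl|\le2q^{1/2}$ and there are at most $2n$ such $y$. By Deligne's theorem (Grothendieck--Lefschetz plus purity), the sum over $U(\Fq)$ is
\[
-\tr\bigl(\mathrm{Frob}\mid H^2_c(U_{\olFq},\mathcal F)\bigr)+\tr\bigl(\mathrm{Frob}\mid H^1_c(U_{\olFq},\mathcal F)\bigr).
\]
$H^1_c$ has weights $\le 3$, and its dimension is bounded in terms of $n$ by Grothendieck--Ogg--Shafarevich: rank $4$, at most $2n$ punctures, and Swan conductor at $\infty$ at most $4n$ (pulling back Swan $1$ by a degree $\le n<p$ map is tame-controlled), all with bounded local Swan. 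So the $H^1_c$ contribution is $O_n(q^{3/2})$. It remains to show $H^2_c=0$, i.e.\ that $\mathcal F$ has no geometrically trivial quotient. This is equivalent to the statement that $[cR_1]^*\cK$ and the dual of $[cR_2]^*\cK$ have no common geometric irreducible constituent. Since $\cK$ is self-dual (its monodromy lies in $\mathrm{SL}_2$), this means $[cR_1]^*\cK\not\simeq[cR_2]^*\cK$ geometrically on $U$, or on a dense open set.

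The key step is to distinguish the two pullbacks by local monodromy at finite points. Since $\cK$ has monodromy $\mathrm{SL}_2$ and $R_i$ is nonconstant, the pullback $[cR_i]^*\cK$ remains geometrically irreducible. Indeed, pulling back along a finite map only restricts the monodromy to a finite index open subgroup of the connected group $\mathrm{SL}_2$, which is still $\mathrm{SL}_2$. (Alternatively, one can argue via the Goursat--Kolchin--Ribet lemma, or simply work on the dense open set where both pullbacks are lisse.) Now look at a point $y_0\in\olFq$ that is a zero of $R_1$ but not of $R_2$; one exists after possibly swapping indices, by the hypothesis on supports. At $y_0$, with multiplicity $m\le n<p$, the local monodromy of $[cR_1]^*\cK$ is the pullback of the unipotent block under the tame map $y\mapsto y^m$ locally. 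This remains a nontrivial unipotent Jordan block, because raising a unipotent element to the power $m$ prime to $p$ keeps it nontrivial. Meanwhile $[cR_2]^*\cK$ is lisse at $y_0$, since $cR_2(y_0)\ne0$ is a point where $\cK$ is lisse. Hence the two sheaves are not geometrically isomorphic, the coinvariants of $\mathcal F$ vanish, $H^2_c=0$, and the bound follows uniformly in $c\in\Fq^\times$. Uniformity holds because $c$ only changes the polynomials and not the degree bounds in the conductor computation.

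I expect the main obstacle to be the bookkeeping in the conductor bound. This requires checking that the Swan conductor at $\infty$ of $[cR_i]^*\cK$ is at most $\deg R_i$, which uses $p>n$ so that the pullback by $y\mapsto y^{\deg R_i}$ near $\infty$ is tame and multiplies breaks by $\deg R_i$. It also requires bounding the Swan conductor of the tensor product, whose breaks are at most $\max(\deg R_1,\deg R_2)$ on a rank $4$ object. This gives $\dim H^1_c\le 4\cdot(2n+1)+4n$ or similar, which is all that is needed.
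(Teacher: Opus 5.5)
Your proposal is correct and follows essentially the same route as the paper. The shared steps are: geometric irreducibility of each pullback via a finite-index subgroup of the connected monodromy group $\mathrm{SL}_2$; non-isomorphism detected by the nontrivial unipotent local monodromy $I+mN$ at a zero of $R_1$ where the second pullback is lisse; vanishing of $H^2_c$; a bounded-conductor input to Deligne's theorem; and the $O_n(q)$ contribution of the omitted zeros via the Weil bound. The differences are only cosmetic. You dispose of the dual via self-duality of $\cK$ rather than by noting that dualizing preserves the lisse locus, and you make the Grothendieck--Ogg--Shafarevich bookkeeping explicit; your Swan bound $4n$ at $\infty$ is generous, the true value being at most $2n$, but any $O_n(1)$ bound suffices.
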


\begin{proof}
For $i=1,2$, let $\cF_i=(cR_i)^*\cK$ on the largest open subset of $\mathbb A^1$ on which it is lisse. Because $R_i$ is nonconstant and $\deg R_i<p$, one has $R_i'\not\equiv0$; hence the finite morphism $R_i:\mathbb A^1\to\mathbb A^1$ is generically separable. Choose a nonempty open set $V_i\subset\mathbb G_m$ avoiding all branch values of $R_i$, and put $U_i=R_i^{-1}(V_i)$. Then $R_i:U_i\to V_i$ is a finite \`etale morphism. Since $U_i$ is obtained from $\mathbb A^1$ by deleting finitely many geometric points, it is geometrically connected; hence this finite \`etale cover is connected. It follows that the image of $\pi_1^{\Geom}(U_i)$ has finite index in $\pi_1^{\Geom}(V_i)$. Since the open immersion $V_i\hookrightarrow\mathbb G_m$ induces a surjection on geometric fundamental groups, its image in $\pi_1^{\Geom}(\mathbb G_m)$ is again of finite index. The geometric monodromy of $\cK$ is the connected group $\mathrm{SL}_2$, so restriction to this finite-index subgroup still has Zariski closure $\mathrm{SL}_2$. In particular, each $\cF_i$ is geometrically irreducible.

Let $S_i$ be the support of the zero divisor of $R_i$. Choose $r\in S_1\setminus S_2$, after interchanging the indices if necessary. If the multiplicity of $r$ as a zero of $R_1$ is $m$, then $1\le m<p$, so in particular $p\nmid m$. The local tame unipotent monodromy $U=I+N$ of $\cK$ at $0$ pulls back to $U^m=I+mN$, which is still nontrivial. Thus $\cF_1$ is ramified at $r$, whereas $\cF_2$ is lisse there. Dualization does not change the lisse locus, so $\cF_1^\vee$ is also ramified at $r$ and therefore cannot be geometrically isomorphic to $\cF_2$. By geometric irreducibility and Schur's lemma,
\[
(\cF_1\otimes\cF_2)^{\pi_1^{\Geom}}=0.
\]
Equivalently, the compactly supported cohomology in degree $2$ has no contribution to the complete trace sum.

The conductors of $\cF_i$ are $O_n(1)$. Indeed, there are at most $n$ finite ramification points, all finite ramification above $0$ is tame, and the polynomial map has tame ramification index at most $n$ at $\infty$ because $p>n$; hence the Swan conductor at $\infty$ is $O(n)$. Deligne's theorem therefore gives an $O_n(q^{3/2})$ bound for the sum over the common lisse locus, since $\cF_1\otimes\cF_2$ has weight $2$. The omitted zeros contribute $O_n(q)$ by the Weil bound for Kloosterman sums, and are absorbed by the same estimate.
\end{proof}

\subsection{Finite Weyl quantization}

Assume in this subsection that $q$ is odd. For a symbol $a:\Fq^2\to\C$, define its Weyl quantization by
\begin{equation}
(\Op^W(a)f)(x)
=
\frac1q\sum_{y,\xi\in\Fq}
 a\!\left(\frac{x+y}{2},\xi\right)
 \psi(\xi(x-y))f(y).
\label{eq:weyl-quantization}
\end{equation}
Thus, if a matrix $K(x,y)$ has Weyl symbol
\[
\sigma_K(m,\xi)=\sum_{u\in\Fq}K\!\left(m+\frac u2,m-\frac u2\right)\psi(-\xi u),
\]
then $K=\Op^W(\sigma_K)$.

We use one standard structural consequence of the finite Weil representation. The invariant Cayley-transform kernel for the Weil representation over finite fields of odd characteristic is given in the ansatz (2.1.3) and Theorem~2.1.5 of \cite{GurevichHadani2007}; maximal-torus restrictions and the associated character-space decompositions are discussed in Section~3.4, in particular Theorem~11, of \cite{GurevichHadani2010}.

\begin{lemma}[Quadratic Weyl phases and one Weil torus]\label{lem:weil-torus-input}
Let $A\in\Fq^\times$ and, for $t\in\Fq^\times$, put
\[
U_t=\Op^W\bigl(\psi(t(Am^2-\xi^2))\bigr).
\]
Set
\[
L_A=
\begin{pmatrix}
0&1\\
A&0
\end{pmatrix},
\qquad
T_A=Z_{\mathrm{SL}_2}(L_A),
\qquad
E_A=\{t\in\Fq^\times:At^2=1\}.
\]
Then $T_A$ is a maximal torus. For every $t\in\Fq^\times\setminus E_A$, define
\begin{equation}
 g_t=(I+tL_A)(I-tL_A)^{-1}
 =\frac{1}{1-At^2}
 \begin{pmatrix}
 1+At^2&2t\\
 2At&1+At^2
 \end{pmatrix}.
\label{eq:gt-cayley}
\end{equation}
There is a unit-modulus scalar $\gamma_A(t)$ such that
\begin{equation}
U_t=\gamma_A(t)\rho(g_t),
\label{eq:Ut-Weil}
\end{equation}
where $\rho$ is the Schr\"odinger Weil representation. Moreover, for every character $\nu$ of $T_A(\Fq)$, the function
\[
t\longmapsto \gamma_A(t)\nu(g_t)
\]
on $\{t\in\Fq^\times:At^2\ne1\}$ is the trace function of a tame rank-one sheaf of conductor bounded by an absolute constant, uniformly in $A$, $q$, and $\nu$. If $At^2=1$, then $U_t$ has rank one and $\|U_t\|=q^{1/2}$.
\end{lemma}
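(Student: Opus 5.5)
The plan is to compute everything explicitly from the kernel, and then match it with the Bruhat-cell formula for the Schr\"odinger--Weil representation.

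\emph{Kernel of $U_t$.} By \eqref{eq:weyl-quantization} with $m=(x+y)/2$ and $u=x-y$, the kernel of $U_t$ is
\[
\frac1q\,\psi(tAm^2)\sum_{\xi}\psi(-t\xi^2+u\xi).
\]
The quadratic Gauss identity evaluates the inner sum as $\chi(-t)\mathfrak g\,\psi\bigl(u^2/(4t)\bigr)$. Hence
\[
U_t(x,y)=q^{-1}\chi(-t)\mathfrak g\,\psi\!\left(\frac{tA(x+y)^2}{4}+\frac{(x-y)^2}{4t}\right).
\]
Expand the quadratic form in $(x,y)$. The $x^2$ and $y^2$ coefficients are both $(At^2+1)/(4t)$, and the $xy$ coefficient is $(At^2-1)/(2t)$.

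\emph{The case $At^2=1$.} Here the cross term vanishes, so the kernel factors as $c\,\psi(\alpha x^2)\psi(\alpha y^2)$ with $|c|=q^{-1/2}$. This is an outer product $v\otimes v$ with $\|v\|_2^2=q$. So $U_t$ has rank one and $\|U_t\|=q^{-1/2}\cdot q=q^{1/2}$, which settles the last assertion.

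\emph{The case $At^2\neq1$.} Here the kernel is a nondegenerate Gaussian. I would first check two facts about $g_t$:
\begin{itemize}
\item $g_t$ commutes with $L_A$, since it is a rational function of $L_A$;
\item $\det g_t=\bigl((1+At^2)^2-4At^2\bigr)/(1-At^2)^2=1$.
\end{itemize}
So $g_t\in T_A$. Moreover $T_A$ is a maximal torus, being the centralizer of the regular semisimple element $L_A$ (its eigenvalues $\pm\sqrt A$ are distinct because $A\ne0$ and $q$ is odd).

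Next I would invoke the standard formula (Gurevich--Hadani) for $\rho(g)$ with $g=\begin{pmatrix}\alpha&\beta\\ \gamma&\delta\end{pmatrix}$ and $\beta\ne0$. Its kernel is a unit multiple of
\[
q^{-1/2}\,\psi\!\left(\frac{\delta x^2-2xy+\alpha y^2}{2\beta}\right),
\]
with the sign and transpose conventions fixed by the chosen $\rho$. For $g_t$ one has $\alpha=\delta=(1+At^2)/(1-At^2)$ and $\beta=2t/(1-At^2)$, which gives
\[
\frac{\delta}{2\beta}=\frac{1+At^2}{4t},\qquad \frac{-1}{\beta}=\frac{At^2-1}{2t}.
\]
These are exactly the coefficients found above, so $U_t=\gamma_A(t)\rho(g_t)$. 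Both operators have norm-one normalization ($U_t$ is unitary, since its kernel has modulus $q^{-1/2}$ and is a nondegenerate Gaussian), so $|\gamma_A(t)|=1$. Explicitly, $\gamma_A(t)$ is a product of $\chi(-t)$, $\chi$ of $\beta$ or of $1-At^2$ up to constants, and a ratio of Gauss sums independent of $t$.

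\emph{Sheaf statement.} The map $t\mapsto g_t$ is a degree-two rational parametrization of $T_A$ (the Cayley transform). A character $\nu$ of $T_A(\Fq)$ is the trace function of a Kummer-type sheaf $\mathcal L_\nu$ on $T_A$. In the split case $T_A\simeq\mathbb G_m$ and this is a Kummer sheaf. In the nonsplit case it is the Lang-torsor sheaf on the norm-one torus, which becomes Kummer after base change to $\mathbb F_{q^2}$. Thus $\nu(g_t)$ is the trace function of the pullback of $\mathcal L_\nu$ along a rational map of bounded degree. That pullback is tame of rank one, with singularities only at $t=0$, $t=\infty$ and $At^2=1$. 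Similarly, $\chi(-t)$ and $\chi(1-At^2)$ are traces of Kummer sheaves $\mathcal L_\chi$ pulled back by polynomials of degree at most $2$. Tensoring gives a tame rank-one sheaf with at most $O(1)$ ramification points, hence conductor bounded absolutely and uniformly in $A$, $q$, $\nu$.

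The main obstacle is pinning down $\gamma_A(t)$ exactly as a product of quadratic-character values of rational functions of $t$ times $t$-independent constants. Without this, its algebraicity (sheaf nature) is not clear, since $\rho$ is a genuine representation only after the Gurevich--Hadani normalization. I would handle this by comparing directly with their invariant Cayley kernel (2.1.3): for $g=(I+X)(I-X)^{-1}$ in the torus, that kernel is $\chi(\det(\ldots))$ times $\psi$ of the quadratic form. Matching normalizations then gives $\gamma_A(t)$ as $\chi$ of a polynomial in $t$ times a fixed constant.
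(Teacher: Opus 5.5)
Your proposal is correct and follows essentially the same route as the paper. Both compute the kernel of $U_t$ by the Gauss identity, read off the rank-one case when $At^2=1$, match the quadratic phase with the Bruhat-cell Schr\"odinger kernel of $\rho(g_t)$ from Gurevich--Hadani, and realize $\nu(g_t)$ through Kummer or Lang-torsor sheaves. The paper also resolves your ``main obstacle'' the way you propose: it compares with the normalized prefactor $q^{-1/2}\varepsilon_\psi\chi(\beta b)$, and since the $\chi(-t)$ from the Gauss sum cancels against $\chi(b_t)$ with $b_t=2t/(1-At^2)$, this yields $\gamma_A(t)=\varepsilon'_\psi\chi(\beta'(1-At^2))$.
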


\begin{proof}
A direct calculation from \eqref{eq:weyl-quantization} gives
\begin{equation}
U_t(x,y)
=
\frac{\mathfrak g}{q}\chi(-t)
\psi\!\left(
\frac{At^2+1}{4t}(x^2+y^2)
+
\frac{At^2-1}{2t}xy
\right).
\label{eq:Ut-kernel}
\end{equation}
If $At^2=1$, the mixed term vanishes, and \eqref{eq:Ut-kernel} factors as a product of one function of $x$ and one function of $y$. Hence $U_t$ has rank one. Since each row has Euclidean norm $1$, its unique nonzero singular value is $q^{1/2}$.

Assume now that $At^2\ne1$. The standard Schr\"odinger kernel for the Weil operator attached to a matrix
$g=\left(\begin{smallmatrix}a&b\\ c&d\end{smallmatrix}\right)$ with $b\ne0$ is, up to a unit scalar,
\[
q^{-1/2}\psi\!\left(\frac{d x^2-2xy+a y^2}{2b}\right);
\]
see \cite{GurevichHadani2007}. For the matrix $g_t$ in \eqref{eq:gt-cayley}, one has
\[
a=d=\frac{1+At^2}{1-At^2},
\qquad
b=\frac{2t}{1-At^2}.
\]
Its kernel therefore has exactly the quadratic phase in \eqref{eq:Ut-kernel}, up to the harmless convention-dependent replacement of $g_t$ by $g_t^{-1}$ and a unit scalar. Absorbing this scalar into $\gamma_A(t)$ gives \eqref{eq:Ut-Weil}. Since $g_t$ is a rational function of $L_A$, it commutes with $L_A$ and hence belongs to $T_A$. Conversely,
\[
T_A=
\left\{
\begin{pmatrix}u&v\\ Av&u\end{pmatrix}:u^2-Av^2=1
\right\},
\]
so $T_A$ is the one-dimensional maximal torus centralizing the regular semisimple element $L_A$.

It remains to justify the uniform tame-conductor assertion. Choose $s\in\overline{\Fq}$ with $s^2=A$. Over $\overline{\Fq}$, the two eigenvalues of $g_t$ are
\begin{equation}
r_t=\frac{1+st}{1-st},
\qquad r_t^{-1}.
\label{eq:torus-cayley-coordinate}
\end{equation}
If $A$ is a square in $\Fq$, this identifies $T_A$ with the split torus $\mathbb G_m$, and every character $\nu$ of $T_A(\Fq)$ pulls back along \eqref{eq:torus-cayley-coordinate} to a Kummer character sheaf. If $A$ is nonsquare, the same description holds after base change to $\Fq(s)$; the $\Fq$-points form the norm-one torus. In this case a character of $T_A(\Fq)$ is realized, via the Lang isogeny, by a rank-one character sheaf on the algebraic torus $T_A$; this is the standard Lang-torsor construction for finite-field character sheaves, see Deligne \cite[Sommes trig., \S1.4]{Deligne1977}. Its finite monodromy has order dividing $|T_A(\Fq)|=q+1$, which is prime to $p$; consequently its local monodromy at the boundary of a smooth compactification is tame. In either case, after pullback by $t\mapsto g_t$, the only possible singularities occur at a fixed set of at most four geometric points, namely $0$, $\infty$, and the two roots of $1-At^2$.

We also make the scalar in \eqref{eq:Ut-Weil} explicit enough for the conductor estimate. In the standard Schr\"odinger normalization, the kernel of $\rho(g)$ for $b\ne0$ has scalar prefactor $q^{-1/2}\varepsilon_\psi\chi(\beta b)$, where $\varepsilon_\psi$ has modulus one and $\beta\in\Fq^\times$ depends only on the fixed Fourier convention; compare the invariant kernel formula (2.1.3) and Theorem~2.1.5 of \cite{GurevichHadani2007}. For $g_t$ one has $b_t=2t/(1-At^2)$, whereas \eqref{eq:Ut-kernel} has scalar $q^{-1/2}(\mathfrak g/q^{1/2})\chi(-t)$. Their ratio therefore has the form
\[
\gamma_A(t)=\varepsilon_\psi'\chi(\beta'(1-At^2)),
\]
for constants $|\varepsilon_\psi'|=1$ and $\beta'\in\Fq^\times$ independent of $t$ and $A$; changing the Weil-representation convention only changes these constants. Hence $\gamma_A$ is itself a tame quadratic Kummer trace function with possible ramification only at the two roots of $1-At^2$ and at infinity. Combining this with the preceding description of $\nu(g_t)$ shows that $\gamma_A(t)\nu(g_t)$ is the trace function of a tame rank-one sheaf. Its rank is one, its possible geometric singularities lie among the fixed set $\{0,\infty\}\cup\{t:1-At^2=0\}$, and all local monodromy at these boundary points is tame. Hence its conductor is bounded by an absolute constant, uniformly in $A$, $q$, and $\nu$. This proves the lemma.
\end{proof}

\section{The fixed-degree incidence theorem}\label{sec:general-proof}

\subsection{The first Fourier transform}

Let $\mathsf A$ be the full incidence matrix whose rows are indexed by $(x,t)\in\Fq^2$ and whose columns are indexed by $(a,b)\in\Fq^2$:
\[
\mathsf A_{(x,t),(a,b)}
=
\1_{\{t=J(x)+aF(x)+bG(x)+\lambda ab\}}.
\]
Let
\[
\mathsf B=\mathsf A-q^{-1}\1\1^*.
\]

\begin{lemma}[Output-frequency reduction]\label{lem:output-fourier}
Let $\mathsf S$ be the matrix with rows indexed by $(\xi,x)\in\Fq^\times\times\Fq$, columns indexed by $(a,b)\in\Fq^2$, and entries
\begin{equation}
\mathsf S_{(\xi,x),(a,b)}
=
\psi\bigl(-\xi(J(x)+aF(x)+bG(x)+\lambda ab)\bigr).
\label{eq:S-definition}
\end{equation}
Then
\begin{equation}
\|\mathsf B\|=q^{-1/2}\|\mathsf S\|.
\label{eq:centered-normalization}
\end{equation}
\end{lemma}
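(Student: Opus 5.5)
The plan is to diagonalize the output coordinate $t$ with the unitary Fourier transform. I will factor $\mathsf B$ as a unitary operator on the row space composed with a block that lives only on nonzero frequencies. Let $\mathcal F_t$ be the unitary Fourier transform in the variable $t$ alone, acting on $\C^{\Fq^2}$ indexed by $(x,t)$ and sending functions of $(x,t)$ to functions of $(x,\xi)$. Since $\mathcal F_t$ is unitary, $\|\mathsf B\|=\|\mathcal F_t\mathsf B\|$, so it suffices to compute the matrix $\mathcal F_t\mathsf B$ entrywise.

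For a fixed column $(a,b)$, the column of $\mathsf A$ is, for each $x$, the indicator of the single point $t=\phi_{a,b}(x):=J(x)+aF(x)+bG(x)+\lambda ab$. Its $t$-Fourier transform at $(\xi,x)$ is therefore
\[
q^{-1/2}\psi\bigl(-\xi\phi_{a,b}(x)\bigr).
\]
The column of $q^{-1}\1\1^*$ is the constant $q^{-1}$. Its transform is supported on $\xi=0$, where it equals $q^{-1}\cdot q^{1/2}=q^{-1/2}$ for every $x$. At $\xi=0$ the first expression is also $q^{-1/2}$. Hence $(\mathcal F_t\mathsf B)_{(\xi,x),(a,b)}$ vanishes when $\xi=0$ and equals $q^{-1/2}\mathsf S_{(\xi,x),(a,b)}$ when $\xi\neq0$.

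So $\mathcal F_t\mathsf B$ is $q^{-1/2}\mathsf S$ padded with zero rows. Zero rows do not change the operator norm, and this gives \eqref{eq:centered-normalization}.

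There is no genuine obstacle. The only point needing care is the normalization: the unitary convention $\widehat f(\xi)=q^{-1/2}\sum f\psi(-\xi x)$ has to be applied consistently. With it, the $\xi=0$ frequency of the indicator and the density-$q^{-1}$ term cancel exactly, rather than leaving a scalar multiple. I also need to check that the sign convention in $\psi(-\xi\,\cdot)$ matches \eqref{eq:S-definition}; any other convention would change $\mathsf S$ only by $\xi\mapsto-\xi$, which is a row permutation.
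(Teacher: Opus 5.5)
Your proposal is correct and follows essentially the same route as the paper. In both, the unitary Fourier transform in $t$ is applied to the rows. The $\xi=0$ block of the transformed $\mathsf A$ then cancels exactly against the transformed rank-one term, the nonzero-frequency blocks are $q^{-1/2}\mathsf S$, and unitarity gives the norm identity.
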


\begin{proof}
Apply the unitary Fourier transform in the $t$-coordinate to the rows of $\mathsf A$. For frequency $\xi$ the transformed entry equals
\[
q^{-1/2}\psi\bigl(-\xi(J(x)+aF(x)+bG(x)+\lambda ab)\bigr).
\]
At $\xi=0$ this is the constant $q^{-1/2}$, exactly the transform of the rank-one matrix $q^{-1}\1\1^*$. Thus the zero-frequency block cancels in $\mathsf B$, while the nonzero blocks are $q^{-1/2}\mathsf S$. Unitarity gives \eqref{eq:centered-normalization}.
\end{proof}

\subsection{The Gram matrix}

For $x,z\in\Fq$ define
\[
H_x(z)=[F(x)-F(z)][G(x)-G(z)].
\]

\begin{lemma}[Gram decomposition]\label{lem:gram-decomposition}
There is a diagonal unitary matrix $\mathsf U$ on $\Fq^\times\times\Fq$ such that
\[
\mathsf U^*\mathsf S\mathsf S^*\mathsf U=\mathsf D+q\mathsf H,
\]
where $\mathsf D$ is block diagonal in the frequency variable and
\[
\|\mathsf D\|\le dq^2,
\]
while, after writing $\alpha=\xi^{-1}$ and $\beta=\eta^{-1}$,
\[
\mathsf H((\alpha,x),(\beta,z))
=
\begin{cases}
\displaystyle
\psi\!\left(-\frac{H_x(z)}{\lambda(\alpha-\beta)}\right),&\alpha\ne\beta,\\[6pt]
0,&\alpha=\beta.
\end{cases}
\]
Here $\alpha,\beta\in\Fq^\times$.
\end{lemma}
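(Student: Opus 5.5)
The plan is to compute the entries of $\mathsf S\mathsf S^*$ directly from \eqref{eq:S-definition} by summing over the two column variables $(a,b)$. The sum over $b$ is a complete additive character sum and collapses to an indicator, so each entry becomes a single phase. A diagonal conjugation should then strip off everything depending on only one of the two row indices.

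For rows $(\xi,x)$ and $(\eta,z)$, write $u=F(x)$, $v=G(x)$, $u'=F(z)$, $v'=G(z)$. Then
\[
(\mathsf S\mathsf S^*)_{(\xi,x),(\eta,z)}
=\psi\bigl(-\xi J(x)+\eta J(z)\bigr)\sum_{a,b}\psi\bigl(a(\eta u'-\xi u)+b(\eta v'-\xi v)+\lambda ab(\eta-\xi)\bigr).
\]
I would first sum over $b$, which gives $q$ times the indicator of the linear condition
\[
\lambda a(\eta-\xi)=\xi v-\eta v'.
\]
The two frequency cases then separate.

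\emph{Case $\xi=\eta$.} The condition becomes $v=v'$, and the sum over $a$ forces $u=u'$. The entry is therefore
\[
q^2\,\psi\bigl(-\xi(J(x)-J(z))\bigr)\,\1_{\{F(x)=F(z),\,G(x)=G(z)\}}.
\]
After the diagonal conjugation below this becomes a nonnegative block-diagonal matrix. For fixed $x$, each row has at most $d$ nonzero entries, since $F(z)=F(x)$ has at most $d$ roots. The Schur test then gives $\|\mathsf D\|\le dq^2$.

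\emph{Case $\xi\ne\eta$.} Here $a$ is uniquely determined, namely $a=(\xi v-\eta v')/(\lambda(\eta-\xi))$. The entry equals
\[
q\,\psi\Bigl(-\frac{(\xi u-\eta u')(\xi v-\eta v')}{\lambda(\eta-\xi)}\Bigr)
\]
times the $J$-phase.

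The key step is rewriting this exponent in the variables $\alpha=\xi^{-1}$ and $\beta=\eta^{-1}$. The exponent becomes
\[
-\frac{(\beta u-\alpha u')(\beta v-\alpha v')}{\lambda\alpha\beta(\alpha-\beta)}.
\]
I would then use the polynomial identity
\[
(\beta u-\alpha u')(\beta v-\alpha v')=\alpha\beta(u-u')(v-v')+(\beta-\alpha)(\beta uv-\alpha u'v'),
\]
which can be checked by expanding both sides. Since $(u-u')(v-v')=H_x(z)$, this splits the phase as
\[
-\frac{H_x(z)}{\lambda(\alpha-\beta)}+\frac{\xi F(x)G(x)}{\lambda}-\frac{\eta F(z)G(z)}{\lambda}.
\]

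Finally I would define $\mathsf U$ as the diagonal matrix with entries
\[
\psi\Bigl(\xi\Bigl(\frac{F(x)G(x)}{\lambda}-J(x)\Bigr)\Bigr).
\]
Conjugation by $\mathsf U$ removes exactly the one-index phases in both cases; in the diagonal case the $F G$-terms cancel anyway on the support $F(x)=F(z)$, $G(x)=G(z)$. After reindexing rows by $\alpha=\xi^{-1}$, which is a permutation of $\Fq^\times$, the off-diagonal part is $q\mathsf H$ with the stated entries, and $\mathsf H$ vanishes when $\alpha=\beta$. The only delicate point is finding the splitting identity above; everything else is bookkeeping.
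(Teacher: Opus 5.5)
Your proposal is correct and follows the paper's proof essentially step for step. You expand $\mathsf S\mathsf S^*$ and evaluate the $(a,b)$-sum, which gives $q^2$ times the fibre indicator on same-frequency blocks (norm $\le dq^2$) and $q\,\psi(-AB/C)$ across frequencies. You then split the phase by a bilinear identity into the $H_x(z)$ term plus one-index $FG$- and $J$-terms, and remove the latter with exactly the paper's diagonal unitary $\psi\bigl(\xi(F(x)G(x)/\lambda-J(x))\bigr)$. The only difference is cosmetic: you write the splitting identity in the inverted variables $\alpha,\beta$, whereas the paper writes it in $\xi,\eta$ and converts via $\xi\eta/(\eta-\xi)=1/(\alpha-\beta)$ at the end.
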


\begin{proof}
For rows $(\xi,x)$ and $(\eta,z)$, expanding \eqref{eq:S-definition} gives
\[
(\mathsf S\mathsf S^*)((\xi,x),(\eta,z))
=
\psi(-\xi J(x)+\eta J(z))
\sum_{a,b}\psi(Aa+Bb+Cab),
\]
where
\[
A=\eta F(z)-\xi F(x),\qquad
B=\eta G(z)-\xi G(x),\qquad
C=\lambda(\eta-\xi).
\]
If $\xi=\eta$, the sum over $a,b$ equals $q^2$ precisely when $F(x)=F(z)$ and $G(x)=G(z)$, and vanishes otherwise. Hence each same-frequency block is, up to diagonal unitary conjugation, $q^2$ times the matrix of the equivalence relation $x\sim z$ if $(F(x),G(x))=(F(z),G(z))$. Each fibre has size at most $d$, since $F$ is nonconstant of degree $d$, so the norm of every such block is at most $dq^2$.

Assume now that $\xi\ne\eta$. The elementary identity
\[
\sum_{a,b}\psi(Cab+Aa+Bb)=q\psi(-AB/C)
\]
gives the required sum. The algebraic identity
\[
\frac{AB}{\eta-\xi}
=
\eta F(z)G(z)-\xi F(x)G(x)
+
\frac{\xi\eta}{\eta-\xi}H_x(z)
\]
shows that the first two terms, together with the $J$-terms, are removed by the diagonal unitary with entries
\[
\psi\!\left(-\xi J(x)+\frac{\xi F(x)G(x)}{\lambda}\right).
\]
The remaining phase is
\[
\psi\!\left(-\frac{\xi\eta}{\lambda(\eta-\xi)}H_x(z)\right).
\]
Since $\xi\eta/(\eta-\xi)=1/(\alpha-\beta)$, the stated formula follows.
\end{proof}

\subsection{The second Fourier transform}

Extend $\mathsf H$ from $\Fq^\times\times\Fq$ to a Hermitian matrix $\widetilde{\mathsf H}$ on $\Fq\times\Fq$ by the same formula, with zero diagonal in the first coordinate. The original $\mathsf H$ is a principal submatrix, hence
\begin{equation}
\|\mathsf H\|\le\|\widetilde{\mathsf H}\|.
\label{eq:principal-submatrix}
\end{equation}
Because the kernel depends on the first coordinates only through their difference, the additive Fourier transform in that coordinate diagonalizes $\widetilde{\mathsf H}$.

\begin{lemma}[Kloosterman block decomposition]\label{lem:kloosterman-blocks}
The matrix $\widetilde{\mathsf H}$ is unitarily equivalent to the direct sum of $q$ matrices $M_\tau$, $\tau\in\Fq$, where
\begin{equation}
M_\tau(x,z)
=
\sum_{h\in\Fq^\times}
\psi\!\left(-\tau h-\frac{H_x(z)}{\lambda h}\right).
\label{eq:M-tau-sum}
\end{equation}
For $\tau\ne0$,
\begin{equation}
M_\tau(x,z)
=
\Kl\!\left(1,\frac{\tau}{\lambda}H_x(z)\right),
\label{eq:kloosterman-block}
\end{equation}
and
\begin{equation}
M_0(x,z)=q\1_{\{H_x(z)=0\}}-1.
\label{eq:zero-block}
\end{equation}
Moreover, $\|M_0\|\ll_{d,e}q$.
\end{lemma}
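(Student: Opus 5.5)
The plan is to Fourier-transform $\widetilde{\mathsf H}$ in its first coordinate and read off the blocks. The kernel $\widetilde{\mathsf H}((\alpha,x),(\beta,z))=k_{x,z}(\alpha-\beta)$ depends on the first coordinates only through $h=\alpha-\beta$, where
\[
k_{x,z}(h)=\psi\!\left(-\frac{H_x(z)}{\lambda h}\right)\ (h\ne0),\qquad k_{x,z}(0)=0 .
\]
Conjugating by the unitary Fourier transform in $\alpha$ turns convolution by $k_{x,z}$ into multiplication by a character sum. Concretely, for each $\tau\in\Fq$ set $e_\tau(\alpha)=q^{-1/2}\psi(-\tau\alpha)$. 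Then applying $\widetilde{\mathsf H}$ to $e_\tau\otimes v$ gives
\[
\sum_{\beta}k_{x,z}(\alpha-\beta)\psi(-\tau\beta)=\psi(-\tau\alpha)\sum_{h}k_{x,z}(h)\psi(\tau h).
\]
Thus the subspaces $e_\tau\otimes\C^{\Fq}$ are invariant, and on each one $\widetilde{\mathsf H}$ acts by the matrix $\sum_{h\ne0}\psi(\tau h-H_x(z)/(\lambda h))$. Replacing $h$ by $-h$ would flip the sign of the second term, so it is cleaner to adjust the sign convention of $e_\tau$, i.e.\ relabel $\tau\mapsto-\tau$. That yields exactly \eqref{eq:M-tau-sum}, and the $q$ orthogonal blocks give the claimed unitary equivalence.

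For $\tau\ne0$, I will substitute $h=-s/\tau$. This gives
\[
-\tau h-\frac{H_x(z)}{\lambda h}=s+\frac{\tau H_x(z)}{\lambda s},
\]
which is \eqref{eq:kloosterman-block} by \eqref{eq:kl-definition}. For $\tau=0$, the sum over $h\in\Fq^\times$ of $\psi(-c/h)$ equals $q-1$ if $c=0$ and $-1$ otherwise, which is \eqref{eq:zero-block}.

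It remains to show $\|M_0\|\ll_{d,e}q$. Write $M_0=q\mathsf E-\1\1^*$, where $\mathsf E$ is the $0/1$ matrix of the relation $H_x(z)=0$, i.e.\ $F(x)=F(z)$ or $G(x)=G(z)$. The rank-one term has norm $q$. Since $\mathsf E$ is symmetric, Schur's test bounds $\|\mathsf E\|$ by its maximal row sum. For fixed $x$, the $z$ with $F(z)=F(x)$ number at most $d$, since $F-F(x)$ is a nonzero polynomial of degree $d$. Likewise there are at most $e$ values of $z$ with $G(z)=G(x)$. So each row sum is at most $d+e$, giving $\|M_0\|\le(d+e)q+q$.

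The only delicate point is bookkeeping of sign and normalization conventions in the Fourier diagonalization, so that the block matches \eqref{eq:M-tau-sum} exactly rather than up to $\tau\mapsto-\tau$. Since the blocks are indexed by all of $\Fq$, that relabeling is harmless in any case.
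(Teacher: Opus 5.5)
Your proposal is correct and follows the paper's proof essentially verbatim: Fourier diagonalization of the convolution kernel in the first coordinate, the substitution $s=-\tau h$ for $\tau\ne0$, inversion on $\Fq^\times$ for $\tau=0$, and Schur's test on the $0/1$ relation matrix (at most $d+e$ ones per row) combined with the norm-$q$ all-ones term to get $\|M_0\|\le(d+e+1)q$. Your explicit bookkeeping of the $\tau\mapsto-\tau$ relabeling is a detail the paper leaves implicit; it does not change the argument.
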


\begin{proof}
The block formula \eqref{eq:M-tau-sum} is the Fourier transform of the convolution kernel in the first coordinate. If $\tau\ne0$, the substitution $s=-\tau h$ turns the phase into $s+(\tau/\lambda)H_x(z)/s$, proving \eqref{eq:kloosterman-block}. If $\tau=0$, inversion on $\Fq^\times$ gives
\[
\sum_{h\ne0}\psi\!\left(-\frac{u}{\lambda h}\right)
=
\begin{cases}
q-1,&u=0,\\
-1,&u\ne0,
\end{cases}
\]
which is \eqref{eq:zero-block}.

Let $E(x,z)=\1_{\{H_x(z)=0\}}$. Since $H_x(z)=0$ if and only if $F(z)=F(x)$ or $G(z)=G(x)$, every row and every column of $E$ contains at most $d+e$ ones. Schur's test gives $\|E\|\le d+e$, while the all-ones matrix has norm $q$. Thus \eqref{eq:zero-block} implies $\|M_0\|\le(d+e+1)q$.
\end{proof}

\subsection{Fourth moments of the nonzero blocks}

For $x\in\Fq$, let $S_x$ denote the support of the zero divisor of the polynomial $H_x(T)$ over $\olFq$; thus multiplicities are discarded.

\begin{lemma}[Exceptional support pairs]\label{lem:exceptional-pairs}
One has
\begin{equation}
\#\{(x,z)\in\Fq^2:S_x=S_z\}\le(d+e)q.
\label{eq:exceptional-count}
\end{equation}
\end{lemma}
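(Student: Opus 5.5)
The plan is to show that the condition $S_x=S_z$ forces $z$ into a small set determined by $x$, and then to count fibres. Everything rests on one elementary observation: the point $x$ always lies in its own support set. Indeed,
\[
H_x(T)=[F(x)-F(T)][G(x)-G(T)]
\]
is a nonzero polynomial in $T$, because $F$ and $G$ are nonconstant. Its degree is $d+e$, since the leading term is $\pm(\text{lead}\,F)(\text{lead}\,G)T^{d+e}$. Both factors vanish at $T=x$, so $x\in S_x$. The same holds for $z$, so $z\in S_z$.

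Now fix a pair $(x,z)\in\Fq^2$ with $S_x=S_z$. Then $z\in S_z=S_x$, so $z$ is a geometric zero of $H_x$. Since $z\in\Fq$, this means $F(z)=F(x)$ or $G(z)=G(x)$. For fixed $x$ we count such $z$ separately:
\begin{itemize}
\item The equation $F(T)=F(x)$ is a nonzero polynomial equation of degree $d$, so it has at most $d$ solutions $z\in\Fq$.
\item Likewise $G(T)=G(x)$ has at most $e$ solutions.
\end{itemize}
Hence for each $x$ there are at most $d+e$ admissible $z$. Summing over the $q$ choices of $x$ gives \eqref{eq:exceptional-count}.

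No step is a genuine obstacle. The only point to check is that $H_x$ is not identically zero, so that "zero of $H_x$" is a finite condition; nonconstancy of $F$ and $G$ guarantees this. The argument also does not use multiplicities, or the reverse inclusion $S_z\subseteq S_x$: it only uses $z\in S_x$.
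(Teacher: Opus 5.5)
Your proof is correct and is the paper's argument: $z\in S_z=S_x$ forces $F(z)=F(x)$ or $G(z)=G(x)$, which allows at most $d+e$ values of $z$ for each $x$, and summing over $x$ gives the bound. Your extra check that $H_x$ is not identically zero is a small, harmless addition that the paper leaves implicit.
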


\begin{proof}
If $S_x=S_z$, then $z\in S_z=S_x$ because $H_z(z)=0$. Hence $H_x(z)=0$, so $F(z)=F(x)$ or $G(z)=G(x)$. For each fixed $x$, the two equations have at most $d$ and $e$ solutions, respectively. Summing over $x$ gives \eqref{eq:exceptional-count}.
\end{proof}

\begin{proposition}[Fourth-moment bound]\label{prop:fourth-moment}
For every $\tau\in\Fq^\times$,
\begin{equation}
\tr(M_\tau^4)\ll_{d,e}q^5.
\label{eq:fourth-moment-bound}
\end{equation}
Consequently,
\begin{equation}
\|M_\tau\|\ll_{d,e}q^{5/4}.
\label{eq:M-norm-general}
\end{equation}
\end{proposition}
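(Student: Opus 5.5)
Since $M_\tau$ is Hermitian (Kloosterman sums are real, and $H_x(z)=H_z(x)$), we have $\|M_\tau\|^4\le\tr(M_\tau^4)$, so \eqref{eq:M-norm-general} follows from \eqref{eq:fourth-moment-bound}. To prove the trace bound, write
\[
\tr(M_\tau^4)=\sum_{x,z}\Bigl|\sum_{y}M_\tau(x,y)M_\tau(y,z)\Bigr|^2=\sum_{x,z}|(M_\tau^2)(x,z)|^2 .
\]
Fix $c=\tau/\lambda\in\Fq^\times$. The entry of $M_\tau^2$ is
\[
(M_\tau^2)(x,z)=\sum_{y\in\Fq}\Kl\bigl(1,cH_x(y)\bigr)\Kl\bigl(1,cH_z(y)\bigr),
\]
where we used $H_y(z)=H_z(y)$. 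It is therefore a two-pullback correlation sum with $R_1=H_x$ and $R_2=H_z$, viewed as polynomials in $T=y$.

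We split the pairs $(x,z)$ according to whether $S_x=S_z$.

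\emph{Exceptional pairs.} By Lemma~\ref{lem:exceptional-pairs} there are at most $(d+e)q$ pairs with $S_x=S_z$. For each of them the uniform bound $|\Kl|\le2q^{1/2}$ gives $|(M_\tau^2)(x,z)|\le4q^2$. Their total contribution is therefore $\ll_{d,e}q\cdot q^4=q^5$.

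\emph{Generic pairs.} For $S_x\ne S_z$ we want to apply Lemma~\ref{lem:sheaf-correlation} with $n=d+e$. This needs $H_x$ to be nonconstant of degree at most $d+e<p$. The degree bound is clear. Nonconstancy holds because $H_x(T)=(F(x)-F(T))(G(x)-G(T))$ has degree exactly $d+e\ge2$ in $T$: the leading coefficient is the product of the leading coefficients of $F$ and $G$, so it does not vanish. The lemma then gives $(M_\tau^2)(x,z)\ll_{d,e}q^{3/2}$, uniformly in $c$. Summing the squares over at most $q^2$ pairs contributes $\ll q^2\cdot q^3=q^5$.

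Adding the two contributions gives $\tr(M_\tau^4)\ll_{d,e}q^5$, hence $\|M_\tau\|\ll q^{5/4}$. The genuine obstacle is the generic-pair estimate, namely that distinct zero supports yield square-root cancellation in the correlation. That is exactly the content of Lemma~\ref{lem:sheaf-correlation}, so once it is granted the proof reduces to the bookkeeping above and to checking that $\deg H_x<p$, which is where the hypothesis $p>d+e$ enters.
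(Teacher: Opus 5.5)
Your proposal is correct and follows the paper's proof essentially line by line: Hermitian symmetry reduces $\tr(M_\tau^4)$ to $\sum_{x,z}|(M_\tau^2)(x,z)|^2$, pairs with $S_x\ne S_z$ are handled by Lemma~\ref{lem:sheaf-correlation}, and the $O_{d,e}(q)$ exceptional pairs of Lemma~\ref{lem:exceptional-pairs} are handled by the pointwise Weil bound, giving $q^2\cdot q^3+q\cdot q^4\ll_{d,e}q^5$. You also verify explicitly that $H_x$ has exact degree $d+e<p$, so the correlation lemma applies with $n=d+e$ uniformly in $(x,z)$; the paper leaves this check implicit.
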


\begin{proof}
Fix $\tau\ne0$ and put $c=\tau/\lambda$. Since $H_x(z)=H_z(x)$ and Kloosterman sums are real, $M_\tau$ is Hermitian. Hence
\begin{equation}
\tr(M_\tau^4)
=
\sum_{x,z\in\Fq}|(M_\tau^2)(x,z)|^2.
\label{eq:fourth-moment-hs}
\end{equation}
Using \eqref{eq:kloosterman-block} and the symmetry $H_y(z)=H_z(y)$,
\[
(M_\tau^2)(x,z)
=
\sum_{y\in\Fq}
\Kl(1,cH_x(y))\Kl(1,cH_z(y)).
\]
If $S_x\ne S_z$, Lemma~\ref{lem:sheaf-correlation} gives
\[
|(M_\tau^2)(x,z)|\ll_{d,e}q^{3/2}.
\]
For the pairs with $S_x=S_z$, the pointwise Weil bound gives
\[
|(M_\tau^2)(x,z)|\le4q^2.
\]
By Lemma~\ref{lem:exceptional-pairs}, the exceptional set has $O_{d,e}(q)$ elements. Substitution in \eqref{eq:fourth-moment-hs} yields
\[
\tr(M_\tau^4)
\ll_{d,e}q^2q^3+q q^4
\ll_{d,e}q^5,
\]
which is \eqref{eq:fourth-moment-bound}. Since $M_\tau$ is Hermitian,
\[
\|M_\tau\|^4\le\tr(M_\tau^4),
\]
and \eqref{eq:M-norm-general} follows.
\end{proof}

\begin{proof}[Proof of Theorem~\ref{thm:general-incidence}]
By Lemma~\ref{lem:kloosterman-blocks}, Proposition~\ref{prop:fourth-moment}, and the direct-sum decomposition,
\[
\|\widetilde{\mathsf H}\|\ll_{d,e}q^{5/4}.
\]
Together with \eqref{eq:principal-submatrix} and Lemma~\ref{lem:gram-decomposition}, this gives
\[
\|\mathsf S\mathsf S^*\|
\ll_{d,e}q^2+q\,q^{5/4}
\ll_{d,e}q^{9/4}.
\]
Hence $\|\mathsf S\|\ll_{d,e}q^{9/8}$. Lemma~\ref{lem:output-fourier} gives
\[
\|\mathsf B\|\ll_{d,e}q^{5/8}.
\]
If $1_{\cP}$ and $1_\Omega$ are the indicator vectors of arbitrary subsets $\cP\subseteq\Fq^2$ and $\Omega\subseteq\Fq^2$, then
\[
I(\cP,\Omega)-\frac{|\cP||\Omega|}{q}
=
\langle1_{\cP},\mathsf B1_\Omega\rangle.
\]
Cauchy--Schwarz finishes the proof.
\end{proof}

\section{Unrestricted expansion consequences in fixed degree}\label{sec:general-expansion}

\begin{proof}[Proof of Corollary~\ref{cor:general-expansion}]
Let
\[
W=\Fq\setminus P(X,Y,Z).
\]
If $W=\varnothing$, there is nothing to prove. Assume henceforth that $W\ne\varnothing$. Use the point set $\cP=X\times W$ and the parameter set $\Omega=Y\times Z$. By the definition of $W$, no point $(x,w)\in\cP$ is incident to a parameter $(y,z)\in\Omega$ for the family \eqref{eq:curve-family}; hence $I(\cP,\Omega)=0$. Theorem~\ref{thm:general-incidence} gives
\[
\frac{|X||W||Y||Z|}{q}
\ll_{d,e}
q^{5/8}\sqrt{|X||W||Y||Z|}.
\]
After squaring and cancelling the common nonzero factor $|X||W||Y||Z|$, and then dividing by $|X||Y||Z|$, we obtain
\[
|W|\ll_{d,e}\frac{q^{13/4}}{|X||Y||Z|}.
\]
The product-size consequence follows immediately from the assumed lower bound on $|X||Y||Z|$.
\end{proof}

\section{The quadratic-range improvement}\label{sec:lowdegree}

Assume throughout this section that $1\le\deg F,\deg G\le2$ and $p>3$. We retain the notation from Section~\ref{sec:general-proof}.

\subsection{Midpoint--difference factorization and the Weyl symbol}

Write
\[
x=m+\frac u2,\qquad z=m-\frac u2.
\]
Since $F$ and $G$ have degree at most $2$,
\[
F(x)-F(z)=uF'(m),\qquad G(x)-G(z)=uG'(m).
\]
Thus
\begin{equation}
H_x(z)=u^2Q(m),
\qquad
Q(m)=F'(m)G'(m),
\qquad
\deg Q\le2.
\label{eq:midpoint-factorization}
\end{equation}
The polynomial $Q$ is nonzero because $F$ and $G$ are nonconstant and $p>2$.

\begin{lemma}[Weyl symbol of a Kloosterman block]\label{lem:weyl-symbol}
Let $c\in\Fq^\times$ and define
\[
M_c(x,z)=\Kl(1,cH_x(z)).
\]
Put
\[
b_c(m,\xi)=\chi(-1)\chi(4cQ(m)-\xi^2).
\]
Then
\begin{equation}
M_c
=
q\Op^W(b_c)-q\sum_{r:Q(r)=0}P_r,
\label{eq:weyl-block-identity}
\end{equation}
where $P_r$ is the orthogonal projection onto the coordinate vector $\delta_r$.
\end{lemma}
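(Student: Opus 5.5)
The plan is to compute the Weyl symbol of $M_c$ directly from the definition $\sigma_K(m,\xi)=\sum_u K(m+u/2,m-u/2)\psi(-\xi u)$ and compare it with $q\,b_c$. By \eqref{eq:midpoint-factorization}, $M_c(m+u/2,m-u/2)=\Kl(1,cu^2Q(m))$. Expanding the Kloosterman sum gives
\[
\sigma_{M_c}(m,\xi)=\sum_{s\in\Fq^\times}\psi(s)\sum_{u\in\Fq}\psi\!\left(\frac{cQ(m)}{s}u^2-\xi u\right).
\]
The key case split is on whether $Q(m)=0$.

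\emph{Case $Q(m)\ne0$.} The inner sum is a quadratic Gauss sum. The quadratic Gauss identity evaluates it as
\[
\chi(cQ(m)/s)\,\mathfrak g\,\psi\!\left(-\frac{\xi^2 s}{4cQ(m)}\right).
\]
Summing over $s$ then gives
\[
\chi(cQ(m))\,\mathfrak g\sum_{s\ne0}\chi(s)\,\psi\bigl(s(1-\xi^2/(4cQ(m)))\bigr).
\]
This is a twisted Gauss sum, equal to $\chi(1-\xi^2/(4cQ(m)))\,\mathfrak g$; when $4cQ(m)=\xi^2$ it vanishes, consistent with $\chi(0)=0$. Using $\mathfrak g^2=\chi(-1)q$ and multiplicativity of $\chi$, the product collapses to
\[
\chi(-1)\,q\,\chi(4cQ(m)-\xi^2)=q\,b_c(m,\xi).
\]

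\emph{Case $Q(m)=0$.} The inner sum over $u$ is $q\1_{\xi=0}$, so the symbol is $q\1_{\xi=0}\sum_{s\ne0}\psi(s)=-q\1_{\xi=0}$. On the other hand, $q\,b_c(m,\xi)=q\chi(-1)\chi(-\xi^2)=q\1_{\xi\ne0}$. The difference between the actual symbol and $q b_c$ on the line $m=r$ is therefore the constant $-q$ in $\xi$.

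It remains to identify this correction as an operator. I will check that the symbol $\1_{\{m=r\}}$ (constant in $\xi$) quantizes, via \eqref{eq:weyl-quantization}, to $\frac1q\sum_\xi\psi(\xi(x-y))\1_{(x+y)/2=r}=\1_{x=y=r}$, which is $P_r$. Since Weyl quantization is linear, the identity $\sigma_{M_c}=q\,b_c-q\sum_{Q(r)=0}\1_{m=r}$ then yields \eqref{eq:weyl-block-identity}.

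The main obstacle is bookkeeping of constants: the sign conventions in the Gauss identity (with $-b^2/4a$), the normalization $\mathfrak g^2=\chi(-1)q$, and verifying that the twisted sum $\sum_s\chi(s)\psi(as)$ equals $\chi(a)\mathfrak g$ with the same $\mathfrak g$. That last fact follows from $\sum_x\psi(x^2)=\sum_s(1+\chi(s))\psi(s)$. Getting exactly $\chi(-1)\chi(4cQ-\xi^2)$, rather than $\chi(\xi^2-4cQ)$, depends on these signs and is where care is needed.
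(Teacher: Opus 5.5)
Your proposal is correct and follows the paper's proof essentially line by line. You compute the Weyl symbol via the midpoint factorization. When $Q(m)\ne0$ you evaluate it by the quadratic Gauss identity followed by a twisted Gauss sum. On each rational root $r$ of $Q$ you observe that the symbol differs from $q\,b_c$ by the constant $-q$, and you quantize $\1_{\{m=r\}}$ to $P_r$, a step you verify explicitly where the paper only asserts it.

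The sign issue you flag is harmless: $\chi(-1)\chi(4cQ(m)-\xi^2)=\chi(\xi^2-4cQ(m))$ by multiplicativity, so the two forms you worried about distinguishing are the same.
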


\begin{proof}
Let $\sigma_c$ be the Weyl symbol of $M_c$. By \eqref{eq:midpoint-factorization},
\[
\sigma_c(m,\xi)
=
\sum_{u\in\Fq}\Kl(1,cu^2Q(m))\psi(-\xi u).
\]
If $Q(m)\ne0$, expand the Kloosterman sum and evaluate the $u$-sum by the quadratic Gauss formula:
\[
\begin{aligned}
\sigma_c(m,\xi)
&=
\sum_{s\ne0}\psi(s)
\sum_u\psi\!\left(\frac{cQ(m)}s u^2-\xi u\right)\\
&=
\mathfrak g\chi(cQ(m))
\sum_{s\ne0}\chi(s)
\psi\!\left(s\left(1-\frac{\xi^2}{4cQ(m)}\right)\right)\\
&=
q\chi(-1)\chi(4cQ(m)-\xi^2).
\end{aligned}
\]
The last identity also covers the case $4cQ(m)=\xi^2$, since the quadratic character is extended by zero. If $Q(m)=0$, then $M_c(m+u/2,m-u/2)=\Kl(1,0)=-1$, so
\[
\sigma_c(m,\xi)=-q\1_{\{\xi=0\}}.
\]
At such an $m$, the difference between this symbol and $q b_c(m,\xi)$ is the constant $-q$ for every $\xi$. Under Weyl inversion this constant symbol at the single midpoint $m=r$ is exactly $-qP_r$. Summing over the roots of $Q$ proves \eqref{eq:weyl-block-identity}.
\end{proof}

Thus it remains to prove a uniform $O(1)$ bound for $\Op^W(b_c)$. We treat the possible shapes of $Q$ separately.

\subsection{Constant and linear symbols}

If $Q$ is constant, then $b_c$ is independent of $m$, so $\Op^W(b_c)$ is a Fourier multiplier and
\[
\|\Op^W(b_c)\|\le1.
\]
The linear case is also elementary, though less immediate.

\begin{lemma}[Linear quadratic-character symbol]\label{lem:linear-symbol}
Assume $p>3$. For every $a\in\Fq^\times$,
\[
\left\|\Op^W\bigl(\chi(am-\xi^2)\bigr)\right\|\ll1.
\]
The same conclusion holds with $am$ replaced by any nonconstant affine-linear function of $m$.
\end{lemma}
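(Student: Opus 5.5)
The plan is to compute the kernel of $T=\Op^W(\chi(am-\xi^2))$ explicitly, move to the Fourier side where the quadratic character disappears into an exact phase, and then bound the result by a $TT^*$ argument.

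\emph{Step 1: rewrite the symbol.} Since $\chi$ is extended by zero, the Gauss-sum expansion gives, for every $n\in\Fq$,
\[
\chi(n)=\mathfrak g^{-1}\sum_{s\ne0}\chi(s)\psi(sn).
\]
Insert this into \eqref{eq:weyl-quantization} with $n=am-\xi^2$. For fixed $s$, the symbol $\psi(sam-s\xi^2)$ is a linear phase in $m$ times a quadratic phase in $\xi$. The $\xi$-sum is a quadratic Gauss sum, and the quadratic Gauss identity evaluates it. The factors $\chi(s)\chi(-s)$ collapse to $\chi(-1)$, and the kernel becomes
\[
T(x,y)=\frac{\chi(-1)}{q}\sum_{s\ne0}\psi\!\left(\frac{sa(x+y)}2+\frac{(x-y)^2}{4s}\right).
\]
This is $q^{-1}$ times a Kloosterman sum in $(x+y)(x-y)^2$. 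The pointwise Weil bound only yields $\|T\|\ll q^{1/2}$, so the structure of the kernel has to be used.

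\emph{Step 2: conjugate by the unitary Fourier transform.} Each $s$-term has the form $M_{sa/2}\,C_s\,M_{sa/2}$, where $M_\theta$ is multiplication by $\psi(\theta x)$ and $C_s$ is convolution with the chirp $\psi(w^2/4s)$. On the Fourier side, $M_\theta$ becomes a translation and $C_s$ becomes multiplication by $\chi(s)$ times a unit multiple of $q^{1/2}\psi(-s\eta^2)$. The decisive consequence is that the two translations force $s=(\eta-\eta')/a$. Hence $\widehat T$ has, for $\eta\ne\eta'$, the explicit entries
\[
\widehat T(\eta,\eta')=\epsilon\,q^{-1/2}\,\chi\!\left(\frac{\eta-\eta'}a\right)\psi\!\left(-\frac{(\eta-\eta')(\eta+\eta')^2}{4a}\right),
\qquad |\epsilon|=1,
\]
with zero on the diagonal. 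So $\widehat T$ is a unimodular-up-to-$q^{-1/2}$ matrix carrying a cubic phase.

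\emph{Step 3: the $TT^*$ computation.} Compute
\[
(\widehat T\widehat T^*)(\eta,\eta')=q^{-1}\sum_\zeta\chi\bigl((\eta-\zeta)(\eta'-\zeta)\bigr)\psi\bigl(\Phi(\eta,\zeta)-\Phi(\eta',\zeta)\bigr),
\]
where $\Phi(\eta,\zeta)=(\eta-\zeta)(\eta+\zeta)^2/(4a)$. The $\zeta^3$ terms cancel, so the phase difference is quadratic in $\zeta$ with leading coefficient proportional to $\eta'-\eta$; here $p>3$ keeps the coefficients $2,3,4$ invertible. The diagonal $\eta=\eta'$ contributes $1-O(q^{-1})$, which is harmless. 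For $\eta\ne\eta'$, complete the square in $\zeta$ and reinsert the Gauss expansion of the quadratic character. The aim is to evaluate the $\zeta$-sum exactly, as in the proof of Lemma~\ref{lem:weyl-symbol}, as a Gauss sum $\mathfrak g$ times a quadratic character of an explicit polynomial in $(\eta,\eta')$ times an explicit phase. That would exhibit $\widehat T\widehat T^*-I$ as $q^{-1/2}$ times a kernel whose structure allows a second application of the same translation trick, or of Lemma~\ref{lem:weighted-vinh} if the phase turns out to be affine in a suitable pair of variables. An $O(1)$ bound for $\widehat T\widehat T^*$ then gives $\|T\|\ll1$.

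\emph{Main obstacle and the affine case.} The hard part is Step 3. Square-root cancellation alone gives only $q^{-1/2}$ per off-diagonal entry, hence a useless $O(q^{1/2})$ via Schur's test, so an exact evaluation of the $\zeta$-sum is required. If that evaluation resists, I would fall back on the fourth moment $\tr(\widehat T\widehat T^*)^2$. There I would try to show that the off-diagonal entries of $\widehat T\widehat T^*$ have further cancellation on average over $\eta$, using Deligne's bound as in Lemma~\ref{lem:sheaf-correlation}. For a general nonconstant affine function $am+b$, the translation $m\mapsto m-b/a$ is implemented on Weyl symbols by conjugation with a unitary translation operator. That reduces the affine case to the case $am$ treated above.
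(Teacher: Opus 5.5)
Your Steps 1 and 2 are correct and match the paper: your $\widehat T$ is exactly its Fourier-basis matrix. The translation reduction for affine symbols is also the paper's. The gap is Step 3, and neither of your two routes through it can work. Put $\eta=s+d$, $\eta'=s-d$, $\zeta=s+t$. Then
\[
(\widehat T\widehat T^*)(\eta,\eta')=\frac1q\,\psi\Bigl(-\frac{d(d^2+4s^2)}{2a}\Bigr)C(d),\qquad C(d)=\sum_{t\in\Fq}\chi(t^2-d^2)\,\psi\Bigl(\frac{dt^2}{2a}\Bigr).
\]
(Your displayed phase has the opposite sign, which is harmless.)

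\textbf{The exact evaluation fails.} The sum $C(d)$ is not of the form ``$\mathfrak g$ times a quadratic character times a phase''. Substituting $w=t^2$ splits it into a Gauss sum of modulus $q^{1/2}$ plus, up to a unimodular factor, the Kloosterman sum $\Kl(1,d^6/(64a^2))$. Kloosterman sums admit no evaluation in terms of Gauss sums and characters.

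\textbf{The fourth-moment fallback fails for a structural reason.} Every off-diagonal entry of $\widehat T$ has modulus exactly $q^{-1/2}$, so $\tr(\widehat T\widehat T^*)=q-1$ and the singular values of $T$ are of order one on average. Hence $\tr\bigl((\widehat T\widehat T^*)^2\bigr)\ge(q-1)^2/q$. The fourth moment can therefore never give better than $\|T\|\ll q^{1/4}$, however much off-diagonal cancellation you prove, and any fixed moment hits the same obstruction.

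\textbf{The missing idea is translation invariance.} The sum $C(d)$ depends only on $d$, and the prefactor is a coboundary times a function of $d$, because
\[
d(d^2+4s^2)=\tfrac23(\eta^3-\eta'^3)-\tfrac1{24}(\eta-\eta')^3 .
\]
Conjugate by the diagonal unitary with entries $\psi(-\eta^3/(3a))$. This turns $\widehat T\widehat T^*$ into the convolution operator whose kernel at $\eta-\eta'=2d$ is $q^{-1}\psi(d^3/(6a))C(d)$. Its norm is the supremum of its Fourier multiplier.

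Now change variables to $u=t-d$, $v=t+d$. Then $\chi(t^2-d^2)=\chi(uv)$ and $\frac{dt^2}{2a}+\frac{d^3}{6a}=\frac{v^3-u^3}{12a}$. So the multiplier at $\lambda$ factors as
\[
\frac1q\Bigl|\sum_{u\in\Fq}\chi(u)\psi\Bigl(-\frac{u^3}{12a}+\lambda u\Bigr)\Bigr|^2 .
\]
Weil's bound for this mixed sum applies because $p>3$ and the cubic coefficient is nonzero, and it makes the multiplier $O(1)$ uniformly in $a$ and $\lambda$. This gives $\|T\|\ll1$.

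So $C(d)$ never has to be evaluated. You need square-root cancellation only in one one-variable sum after diagonalizing, and this is exactly the paper's argument.
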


\begin{proof}
A translation in the physical variable conjugates Weyl quantization by translation and removes the constant term of an affine-linear function, so it suffices to consider $am$. Put
\[
B_a=\Op^W\bigl(\chi(am-\xi^2)\bigr),
\]
and let $e_\eta(x)=q^{-1/2}\psi(\eta x)$ be the Fourier basis. Expanding the quadratic character by a Gauss sum and summing first in the physical variable gives
\[
B_ae_\eta
=
\mathfrak g^{-1}
\sum_{t\ne0}
\chi(t)\psi\!\left(-t\left(\eta+\frac{at}{2}\right)^2\right)e_{\eta+at}.
\]
Hence, for $\nu\ne\eta$,
\[
\langle B_ae_\eta,e_\nu\rangle
=
\mathfrak g^{-1}
\chi\!\left(\frac{\nu-\eta}{a}\right)
\psi\!\left(
-\frac{\nu-\eta}{a}\left(\frac{\nu+\eta}{2}\right)^2
\right),
\]
and the diagonal entries vanish.

Write $\nu=s+d$, $\mu=s-d$. A direct multiplication of the Fourier-basis matrix yields
\[
(B_aB_a^*)(\nu,\mu)
=
\frac1q
\psi\!\left(-\frac{d(d^2+4s^2)}{2a}\right)C_a(d),
\qquad
C_a(d)=\sum_{t\in\Fq}\chi(t^2-d^2)\psi\!\left(\frac{dt^2}{2a}\right).
\]
The identity
\[
d(d^2+4s^2)
=
\frac23(\nu^3-\mu^3)-\frac1{24}(\nu-\mu)^3
\]
shows that $B_aB_a^*$ is unitarily conjugate, by the cubic diagonal multiplier $\psi(-\nu^3/(3a))$, to convolution with the kernel
\[
c_a(2d)
=
\frac1q\psi\!\left(\frac{d^3}{6a}\right)C_a(d).
\]
Its Fourier multiplier at $\lambda\in\Fq$ is
\begin{equation}
\widehat c_a(\lambda)
=
\frac1q
\left|
\sum_{u\in\Fq}
\chi(u)\psi\!\left(-\frac{u^3}{12a}+\lambda u\right)
\right|^2.
\label{eq:linear-symbol-multiplier}
\end{equation}
Indeed, in the double sum defining the multiplier use $u=t-d$, $v=t+d$; then $t^2-d^2=uv$ and
\[
\frac{dt^2}{2a}+\frac{d^3}{6a}
=
\frac{v^3-u^3}{12a},
\]
so the sum factors as a quantity times its complex conjugate. The inner sum in \eqref{eq:linear-symbol-multiplier} is the complete trace sum of the tensor product of the quadratic Kummer sheaf $\mathcal L_\chi$ with the Artin--Schreier sheaf attached to $-u^3/(12a)+\lambda u$. Since $p>3$ and the cubic coefficient is nonzero, this rank-one sheaf has bounded conductor and is not geometrically trivial. Deligne's theorem \cite{Deligne1980} therefore gives an $O(q^{1/2})$ bound, uniformly in $a$ and $\lambda$. Hence $\widehat c_a(\lambda)\ll1$ uniformly in $\lambda$, and consequently $\|B_aB_a^*\|\ll1$. This proves the lemma.
\end{proof}

\subsection{The degenerate quadratic case}

\begin{lemma}[Affine dependence and Vinh reduction]\label{lem:degenerate-quadratic}
Suppose that $Q=F'G'$ has degree $2$ and $\disc Q=0$. Then the conclusion of Theorem~\ref{thm:low-degree-incidence} holds.
\end{lemma}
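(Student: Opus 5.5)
The plan is to show that the hypothesis forces an affine relation between $F$ and $G$. The family then collapses to a family of lines in coordinates $(F(x),\,t-J(x))$, and Lemma~\ref{lem:weighted-vinh} finishes the argument. The proof is elementary and does not use the Kloosterman or Weyl machinery.

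\emph{Step 1: structure of $F,G$.} Since $\deg Q=\deg F'+\deg G'=2$ with $\deg F,\deg G\le2$ and $p>3$, both $F'$ and $G'$ are linear. The vanishing discriminant means that $Q$ has a double root $r$. As $F'$ and $G'$ are linear with product $c(m-r)^2$, both vanish at $r$. Hence $F=A(x-r)^2+c_1$ and $G=B(x-r)^2+c_2$ with $A,B\in\Fq^\times$. Consequently $G=\kappa F+c$ with $\kappa=B/A\ne0$ and $c=c_2-\kappa c_1$.

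\emph{Step 2: reduction to lines.} Put $U=F(x)$ and $R=t-J(x)$. The incidence relation becomes
\[
R=(a+\kappa b)U+cb+\lambda ab .
\]
Define $\pi(x,t)=(F(x),t-J(x))$, and send the parameter $(a,b)$ to the line $R=sU+w$ with $s=a+\kappa b$ and $w=b(c+\lambda(s-\kappa b))$.

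\begin{itemize}
\item Each fibre of $\pi$ has size at most $2$, because $F(x)=U$ has at most two solutions.
\item For fixed $(s,w)$, the equation for $b$ is a genuine quadratic (its leading coefficient is $-\lambda\kappa\ne0$), so it has at most two solutions. Then $a=s-\kappa b$ is determined, and the parameter map also has fibres of size at most $2$.
\end{itemize}

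Then
\[
I(\cP,\Omega)=\sum_\ell\sum_{p\in\ell}w_P(p)w_L(\ell),
\]
where $w_P(U,R)=\#\{(x,t)\in\cP:\pi(x,t)=(U,R)\}$ and $w_L(\ell)$ counts the parameters in $\Omega$ mapping to the nonvertical line $\ell$. Vertical lines receive weight zero.

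\emph{Step 3: apply the weighted Vinh inequality.} The weights satisfy $\sum w_P=|\cP|$ and $\sum w_L=|\Omega|$. Because the fibres have size at most $2$, we also have $\|w_P\|_2^2\le2|\cP|$ and $\|w_L\|_2^2\le2|\Omega|$. Lemma~\ref{lem:weighted-vinh} then gives
\[
\Bigl|I(\cP,\Omega)-\frac{|\cP||\Omega|}{q}\Bigr|\le 2q^{1/2}\sqrt{|\cP||\Omega|},
\]
which is the conclusion of Theorem~\ref{thm:low-degree-incidence} in this case.

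The only delicate point is the bookkeeping in Step 2. One must check that the parameter fibres are uniformly bounded: the $b$-equation stays quadratic precisely because $\lambda\kappa\ne0$. One must also check that the main term is preserved exactly, which holds since the point and line multiplicities sum to $|\cP|$ and $|\Omega|$.
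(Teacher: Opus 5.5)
Your proposal is correct and is essentially the paper's argument. You obtain $G=\kappa F+\delta$ (via the common double root of $F'$ and $G'$, where the paper says directly that $F'$ and $G'$ are proportional), push $\cP$ and $\Omega$ forward to point--line incidences through $(F(x),t-J(x))$ and $(a+\kappa b,\delta b+\lambda ab)$ with fibres of size at most $2$, and apply Lemma~\ref{lem:weighted-vinh} to get the constant $2$.
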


\begin{proof}
Both $F'$ and $G'$ are nonconstant linear polynomials. Since their product has zero discriminant, they are proportional: $G'=\kappa F'$ for some $\kappa\in\Fq^\times$. Hence
\[
G=\kappa F+\delta
\]
for some $\delta\in\Fq$. The incidence equation becomes
\[
t-J(x)=(a+\kappa b)F(x)+\delta b+\lambda ab.
\]
Define
\[
(s,r)=(F(x),t-J(x)),
\qquad
(u,v)=(a+\kappa b,\delta b+\lambda ab).
\]
Then incidences become ordinary point--line incidences $r=us+v$.

The point map has fibres of size at most $2$, because $F$ is quadratic. For fixed $(u,v)$, substituting $a=u-\kappa b$ into the definition of $v$ gives
\[
\lambda\kappa b^2-(\delta+\lambda u)b+v=0,
\]
so the parameter map also has fibres of size at most $2$. Push the indicator functions of $\cP$ and $\Omega$ forward under these two maps. The resulting nonnegative integer weights are bounded by $2$, and their $\ell^1$ norms are $|\cP|$ and $|\Omega|$, respectively. Lemma~\ref{lem:weighted-vinh} gives
\[
\left|I(\cP,\Omega)-\frac{|\cP||\Omega|}{q}\right|
\le
q^{1/2}\|w_P\|_2\|w_L\|_2
\le
2q^{1/2}\sqrt{|\cP||\Omega|}.
\]
\end{proof}

\subsection{The nondegenerate quadratic symbol}

\begin{lemma}[Quadratic Weyl-symbol estimate]\label{lem:quadratic-symbol}
Assume $p>3$ and let $A,C\in\Fq^\times$. Then
\[
\left\|
\Op^W\bigl(\chi(Am^2+C-\xi^2)\bigr)
\right\|_{2\to2}
\ll1,
\]
with an absolute implied constant.
\end{lemma}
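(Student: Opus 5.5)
We expand the quadratic character of the symbol by a Gauss sum, so that the operator becomes a superposition of the quadratic Weyl phases of Lemma~\ref{lem:weil-torus-input}. For $w\in\Fq$ one has $\chi(w)=\mathfrak g^{-1}\sum_{t\ne0}\chi(t)\psi(tw)$. Hence
\[
\Op^W\bigl(\chi(Am^2+C-\xi^2)\bigr)
=
\mathfrak g^{-1}\sum_{t\in\Fq^\times}\chi(t)\psi(tC)\,U_t,
\qquad
U_t=\Op^W\bigl(\psi(t(Am^2-\xi^2))\bigr).
\]
The exceptional parameters $t\in E_A$ number at most two. By Lemma~\ref{lem:weil-torus-input} each corresponding $U_t$ has norm $q^{1/2}$, so after the factor $|\mathfrak g|^{-1}=q^{-1/2}$ these terms contribute $O(1)$. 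It remains to bound
\[
\mathcal T=q^{-1/2}\sum_{t\notin E_A\cup\{0\}}\chi(t)\psi(tC)\gamma_A(t)\rho(g_t).
\]
Every $g_t$ lies in the torus $T_A(\Fq)$, which is abelian, so the operators $\rho(g_t)$ commute. Up to the standard projective-scalar issue, the Weil representation restricted to $T_A(\Fq)$ decomposes $L^2(\Fq)$ into character spaces $V_\nu$, each of dimension at most $2$ (this is the content cited from \cite{GurevichHadani2010}, Theorem~11). If a cocycle appears, I would absorb it into $\gamma_A$, which is harmless since a linearized Weil representation exists for odd $q$.

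On $V_\nu$ the operator $\mathcal T$ acts as multiplication by the scalar
\[
q^{-1/2}\sum_{t}\chi(t)\psi(tC)\,\gamma_A(t)\nu(g_t).
\]
The norm of $\mathcal T$ is therefore the maximum of these scalars over $\nu$. The sum is the complete trace sum of
\[
\mathcal L_\chi\otimes\mathcal L_{\psi(Ct)}\otimes\mathcal G_\nu,
\]
where $\mathcal G_\nu$ is the tame rank-one sheaf supplied by Lemma~\ref{lem:weil-torus-input}, with uniformly bounded conductor. The Artin--Schreier factor is wildly ramified at $\infty$ with Swan conductor $1$ because $C\ne0$. All other factors are tame at $\infty$, so the tensor product is a rank-one sheaf with Swan conductor $1$ at $\infty$. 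In particular it is not geometrically trivial, and its conductor is bounded absolutely. Deligne's theorem then gives an $O(q^{1/2})$ bound, uniformly in $\nu$, $A$ and $C$. Hence every eigenvalue of $\mathcal T$ is $O(1)$ and the lemma follows.

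The main obstacle is justifying that the diagonalization is simultaneous and uniform. One must check that the scalars $\gamma_A(t)$ combine with the projective Weil representation into a genuine character of $T_A(\Fq)$ on each eigenspace. One must also handle the isotypic pieces of dimension $2$, which occur for the trivial and quadratic characters, where the torus may act by a scalar but the eigenvalue formula is unchanged. I would resolve this using the explicit form of $\gamma_A(t)=\varepsilon'\chi(\beta'(1-At^2))$ from the proof of Lemma~\ref{lem:weil-torus-input}. Since $\rho$ restricted to $T_A(\Fq)$ is a genuine representation, $\rho(g_t)$ acts on $V_\nu$ exactly by $\nu(g_t)$, and the extra factors are scalar functions of $t$ already covered by the conductor bound. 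A second, minor point is that the Artin--Schreier wild ramification cannot be cancelled by the tame factors, since a tame sheaf has Swan conductor $0$ at $\infty$. This is what makes $C\ne0$ essential.
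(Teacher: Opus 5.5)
Your proposal is correct and follows essentially the same route as the paper. Both arguments expand $\chi$ by a Gauss sum into the quadratic Weyl phases $U_t$, bound the at most two exceptional $t\in E_A$ by $O(1)$, diagonalize the remaining $\gamma_A(t)\rho(g_t)$ simultaneously on the $T_A$-isotypic spaces, and bound each eigenvalue by Deligne's theorem, with $\psi(Ct)$ supplying wild ramification at $\infty$ that the tame factor cannot cancel. The worries in your last paragraph are not real obstacles: for odd $q$ the Weil representation is linear, so there is no cocycle, and an abelian group acts by the scalar $\nu$ on its whole $\nu$-isotypic component whatever its dimension. Likewise, writing $q^{-1/2}$ in place of $\mathfrak g^{-1}$ in $\mathcal T$ only changes a unimodular constant and does not affect the norm.
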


\begin{proof}
Expand the quadratic character by the Gauss identity:
\begin{equation}
\Op^W\bigl(\chi(Am^2+C-\xi^2)\bigr)
=
\mathfrak g^{-1}
\sum_{t\ne0}\chi(t)\psi(Ct)U_t,
\qquad
U_t=\Op^W\bigl(\psi(t(Am^2-\xi^2))\bigr).
\label{eq:quadratic-symbol-gauss}
\end{equation}
By Lemma~\ref{lem:weil-torus-input}, the parameters in the exceptional set $E_A$ contribute $O(1)$ to the norm of \eqref{eq:quadratic-symbol-gauss}, because $|E_A|\le2$, $\|U_t\|=q^{1/2}$ there, and $|\mathfrak g|=q^{1/2}$.

For $t\in\Fq^\times\setminus E_A$, write $U_t=\gamma_A(t)\rho(g_t)$ with $g_t\in T_A$. Since $T_A(\Fq)$ is finite abelian and $\rho$ is unitary, the representation space decomposes orthogonally into $T_A$-character spaces. On the $\nu$-isotypic space, the nonexceptional part of \eqref{eq:quadratic-symbol-gauss} acts by the scalar
\begin{equation}
\lambda_\nu
=
\mathfrak g^{-1}
\sum_{t\in\Fq^\times\setminus E_A}
\chi(t)\gamma_A(t)\nu(g_t)\psi(Ct).
\label{eq:torus-eigenvalue}
\end{equation}
By Lemma~\ref{lem:weil-torus-input}, the factor $\chi(t)\gamma_A(t)\nu(g_t)$ is the trace function of a tame rank-one sheaf with conductor bounded by an absolute constant. Since $C\ne0$, the Artin--Schreier factor $\psi(Ct)$ is wildly ramified at $\infty$ and cannot be cancelled by this tame factor. Deligne's theorem \cite{Deligne1980} therefore gives
\[
\left|
\sum_{t\in\Fq^\times\setminus E_A}
\chi(t)\gamma_A(t)\nu(g_t)\psi(Ct)
\right|
\ll q^{1/2},
\]
uniformly in $\nu$. Dividing by $|\mathfrak g|=q^{1/2}$ in \eqref{eq:torus-eigenvalue} yields $|\lambda_\nu|\ll1$. Taking the maximum over character spaces and adding the exceptional contribution proves the lemma.
\end{proof}

\subsection{Completion of the low-degree proof}

\begin{proof}[Proof of Theorem~\ref{thm:low-degree-incidence}]
If $Q$ in \eqref{eq:midpoint-factorization} is quadratic with zero discriminant, Lemma~\ref{lem:degenerate-quadratic} proves the theorem directly. We may therefore assume that $Q$ is constant, nonconstant linear, or quadratic with nonzero discriminant.

For $\tau\ne0$, put $c=\tau/\lambda$. Lemma~\ref{lem:weyl-symbol} expresses the Kloosterman block as $q\Op^W(b_c)$ plus at most two rank-one terms of norm $q$. If $Q$ is constant, $\|\Op^W(b_c)\|\le1$. If $Q$ is linear, a translation in $m$ and multiplication by a harmless quadratic-character constant reduce to Lemma~\ref{lem:linear-symbol}. If $Q$ is quadratic with nonzero discriminant, completing the square gives
\[
4cQ(m)=A(m-m_0)^2+C
\]
with $A,C\in\Fq^\times$. Translation in the physical variable is a unitary conjugation of Weyl quantization, so Lemma~\ref{lem:quadratic-symbol} applies. Consequently,
\[
\|M_\tau\|\ll q
\qquad(\tau\ne0).
\]
The zero block satisfies $\|M_0\|\ll q$ by Lemma~\ref{lem:kloosterman-blocks}. Hence the full extended cross-frequency matrix in Section~\ref{sec:general-proof} has norm $O(q)$. Lemma~\ref{lem:gram-decomposition} now gives
\[
\|\mathsf S\mathsf S^*\|\ll q^2,
\qquad
\|\mathsf S\|\ll q.
\]
Finally, Lemma~\ref{lem:output-fourier} yields $\|\mathsf B\|\ll q^{1/2}$, and pairing with indicator vectors proves the theorem.
\end{proof}

\section{Unrestricted expansion consequences at the Vinh scale}

\begin{proof}[Proof of Corollary~\ref{cor:low-degree-expansion}]
With $W=\Fq\setminus P(X,Y,Z)$, the case $W=\varnothing$ is trivial. Assume $W\ne\varnothing$, and take $\cP=X\times W$ and $\Omega=Y\times Z$ exactly as in the proof of Corollary~\ref{cor:general-expansion}. Then $I(\cP,\Omega)=0$, and Theorem~\ref{thm:low-degree-incidence} gives
\[
\frac{|X||W||Y||Z|}{q}
\ll
q^{1/2}\sqrt{|X||W||Y||Z|}.
\]
Thus
\[
|W|\ll\frac{q^3}{|X||Y||Z|},
\]
which is the first assertion. The product-size consequence follows on inserting $|X||Y||Z|\ge q^{2+\varepsilon}$.
\end{proof}

\begin{proof}[Proof of Corollary~\ref{cor:intrinsic-quadratic-expansion}]
Let $W=\Fq\setminus P(X,Y,Z)$. If $W=\varnothing$, there is nothing to prove. Otherwise take $\cP=X\times W$ and $\Omega=Y\times Z$. As before, $I(\cP,\Omega)=0$. Theorem~\ref{thm:intrinsic-quadratic} gives
\[
\frac{|X||W||Y||Z|}{q}
\ll
\sqrt{\kappa(F,G)}\,q^{1/2}\sqrt{|X||W||Y||Z|}.
\]
Squaring and cancelling the nonzero factor $|X||W||Y||Z|$ yields
\[
|W|
\ll
\kappa(F,G)\frac{q^3}{|X||Y||Z|}.
\]
The second assertion follows by inserting the stated product-size hypothesis.
\end{proof}

\section{Further remarks}

Theorem~\ref{thm:general-incidence} is insensitive to the degree of the common term $J$: it disappears from the cross-frequency spectral problem after diagonal unitary conjugation. The exponent $5/8$ arises from a fourth-moment estimate. Improving it for primitive coefficient curves of degree at least three would require additional control of spectral concentration beyond the two-pullback correlation used here. In the primitive quadratic range, the midpoint--difference factorization reduces the problem to a quadratic phase-space symbol, which is why the Weil representation recovers the Vinh scale.

Proposition~\ref{prop:primitive-factorization} and Theorem~\ref{thm:intrinsic-quadratic} show that raw polynomial degree mixes two different kinds of complexity: the intrinsic degree of the geometric coefficient curve and the multiplicity of its parametrization. The latter is spectrally inexpensive in the precise sense of Proposition~\ref{prop:finite-fibre-transfer}. Thus a natural continuation of the present work is to study the spectral problem for primitive coefficient curves of intrinsic degree three and higher, rather than to organize the problem only by the raw degrees of $F$ and $G$.

The tangent-plane formulation in Proposition~\ref{prop:tangent-planes} suggests a complementary geometric question: to what extent can strong mixing be obtained when points are constrained to a low-degree cylinder and the dual hyperplanes are constrained to a low-degree coefficient variety? The present family is a particularly rigid instance in which both restrictions can be exploited simultaneously.

\paragraph{\textbf{AI Assistance Statement}}

The author used ChatGPT, with GPT-5.6 available through a ChatGPT Plus subscription, as an auxiliary research and writing tool during parts of the exploratory development of this work. The tool was used for brainstorming possible proof strategies, checking intermediate calculations and logical dependencies, organizing arguments, and assisting with the exposition and preparation of the manuscript.

All mathematical statements, proofs, references, and final formulations appearing in this paper were reviewed and selected by the author. The author assumes full responsibility for the correctness, originality, and presentation of the results.

\end{document}